\newtheorem{theorem}{Theorem}
\newtheorem{lemma}{Lemma}
\newtheorem{corollary}{Corollary}
\newtheorem{observation}{Observation}
\begin{document}
\title{\Large\bf On extremal graphs with at most two internally disjoint
Steiner trees connecting any three vertices\footnote{Supported by
NSFC No.11071130 and the ``973" project.}}
\author{\small Hengzhe Li, Xueliang Li, Yaping Mao
\\
\small Center for Combinatorics and LPMC-TJKLC
\\
\small Nankai University, Tianjin 300071, China
\\
\small lhz2010@mail.nankai.edu.cn; lxl@nankai.edu.cn;
maoyaping@ymail.com.}
\date{}
\maketitle
\begin{abstract}
The problem of determining the smallest number of edges,
$h(n;\overline{\kappa}\geq r)$, which guarantees that any graph with
$n$ vertices and $h(n;\overline{\kappa}\geq r)$ edges will contain a
pair of vertices joined by $r$ internally disjoint paths was posed
by Erd\"{o}s and Gallai. Bollob\'{a}s considered the problem of
determining the largest number of edges $f(n;\overline{\kappa}\leq
\ell)$ for graphs with $n$ vertices and local connectivity at most
$\ell$. One can see that $f(n;\overline{\kappa}\leq \ell)=
h(n;\overline{\kappa}\geq \ell+1)-1$. These two problems had
received a wide attention of many researchers in the last few
decades. In the above problems, only pairs of vertices connected by
internally disjoint paths are considered. In this paper, we study
the number of internally disjoint Steiner trees connecting sets of
vertices with cardinality at least $3$.

{\flushleft\bf Keywords}: connectivity, internally disjoint Steiner
trees, generalized connectivity, generalized local connectivity.\\[2mm]
{\bf AMS subject classification 2010:} 05C40, 05C05, 05C76.
\end{abstract}

\section{Introduction}

All graphs considered in this paper are undirected, finite and
simple. We refer to book \cite{Bondy} for graph theoretical notation
and terminology not described here. We call the number of vertices
in a graph as the {\it order} of the graph and the number of edges
of it as its {\it size}. For two distinct vertices in a connected
graph $G$, we can connect them by a path. Two paths are called
\emph{internally disjoint} if they have no common vertex except the
end vertices. For any two distinct vertices $x$ and $y$ in $G$, the
\emph{local connectivity} $\kappa_{G}(x,y)$ is the maximum number of
internally disjoint paths connecting $x$ and $y$. Then
$\min\{\kappa_{G}(x,y)|x,y\in V(G),x\neq y\}$ is usually the
connectivity of $G$. In contrast to this parameter,
$\overline{\kappa}(G)=\max\{\kappa_{G}(x,y)|x,y\in V(G),x\neq y\}$,
introduced by Bollob\'{a}s, is called the \emph{maximum local
connectivity} of $G$. The problem of determining the smallest number
of edges, $h(n;\overline{\kappa}\geq r)$, which guarantees that any
graph with $n$ vertices and $h(n;\overline{\kappa}\geq r)$ edges
will contain a pair of vertices joined by $r$ internally disjoint
paths was posed by Erd\"{o}s and Gallai, see \cite{Bartfai} for
details.

Bollob\'{a}s \cite{Bollobas1} considered the problem of determining
the largest number of edges, $f(n;\overline{\kappa}\leq \ell)$, for
graphs with $n$ vertices and local connectivity at most $\ell$.
Actually, $f(n;\overline{\kappa}\leq \ell)=\max\{e(G)| |V(G)|=n\
and\ \overline{\kappa}(G)\leq \ell\}$. Motivated by determining the
precise value of $f(n;\overline{\kappa}\leq \ell)$, this problem has
obtained wide attention and many results have been worked out, see
\cite{Bollobas1, Bollobas2, Bollobas3, Thomassen, Leonard1,
Leonard2, Leonard3, Mader1, Mader2}. One can see that
$h(n;\overline{\kappa}\geq \ell+1)=f(n;\overline{\kappa}\leq
\ell)+1$.

For $\overline{\kappa}(G)\leq \ell$, it was showed that
$f(n;\overline{\kappa}\leq \ell)\geq
\lfloor\frac{\ell+1}{2}(n-1)\rfloor$. Since
$f(n;\overline{\kappa}\leq
\ell)=\lfloor\frac{\ell+1}{2}(n-1)\rfloor$ for $\ell=2,3$,
Bollob\'{a}s and Erd\"{o}s conjectured that the equality holds, but
this was disproved by Leonard \cite{Leonard1} for $\ell=4$, and
later Mader \cite{Mader1} constructed graphs disproving it for every
$\ell \geq 4$.

For a graph $G(V,E)$ and a set $S\subseteq V$ of at least two
vertices, \emph{an $S$-Steiner tree} or \emph{an Steiner tree
connecting $S$} (or simply, \emph{an $S$-tree}) is a such subgraph
$T(V',E')$ of $G$ that is a tree with $S\subseteq V'$. Two Steiner
trees $T$ and $T'$ connecting $S$ are \emph{internally disjoint} if
$E(T)\cap E(T')=\varnothing$ and $V(T)\cap V(T')=S$. For $S\subseteq
V(G)$, the \emph{generalized local connectivity} $\kappa_{G}(S)$ is
the maximum number of internally disjoint trees connecting $S$ in
$G$. The \emph{generalized connectivity}, introduced by Chartrand et
al. in 1984 \cite{Chartrand}, is defined as
$\kappa_k(G)=min\{\kappa(S)|S\subseteq V(G),|S|=k\}$. There have
been many results on the generalized connectivity, see
\cite{LLSun,LL, LLL, LLZ}. Similar to the classical maximal local
connectivity, we introduce another parameter
$\overline{\kappa}_k(G)=max\{\kappa(S)|S\subseteq V(G),|S|=k\}$,
which is called the {\it maximum generalized local connectivity} of
$G$. It is easy to check that
$0\leq\overline{\kappa}_k(G)\leq\overline{\kappa}_k(K_n)\leq
n-k+\lfloor k/2\rfloor$ for a graph $G$.

In this paper, we mainly study the problem of determining the
largest number of edges, $f(n;\overline{\kappa}_k\leq \ell)$, for
graphs with $n$ vertices and maximum generalized local connectivity
at most $\ell$, where $0\leq\ell\leq n-k+\lfloor k/2\rfloor$. That
is, $f(n;\overline{\kappa}_k\leq \ell)=\max\{e(G)| |V(G)|=n\ and\
\overline{\kappa}_k(G)\leq \ell\}$. We also study the smallest
number of edges, $h(n;\overline{\kappa}_k\geq r)$, which guarantees
that any graph with $n$ vertices and $h(n;\overline{\kappa}_k\geq
r)$ edges will contain a set $S$ of $k$ vertices such that there are
$r$ internally disjoint $S$-trees, where $0\leq r\leq n-k+\lfloor
k/2\rfloor$. It is not difficult to see that
$h(n;\overline{\kappa}_k\geq \ell+1)= f(n;\overline{\kappa}_k\leq
\ell)+1$ for $0\leq \ell\leq n-k+\lfloor k/2\rfloor-1$. For $k=3$
and $\ell=2$, we prove that $f(n;\overline{\kappa}_3\leq 2)=2n-3$
for $n\geq 3$ and $n\neq 4$, and $f(n;\overline{\kappa}_3\leq
2)=2n-2$ for $n=4$. Furthermore, we characterize the graphs
attaining these values. For $k=3$ and a general $\ell$, we construct
some graphs to show that $f(n;\overline{\kappa}_3\leq \ell)\geq
\frac{\ell+2}{2}(n-2)+\frac{1}{2}$ for both $n$ and $\ell$ odd, and
$f(n;\overline{\kappa}_3\leq \ell)\geq \frac{\ell+2}{2}(n-2)+1$
otherwise.

\section{Some basic results}

As usual, the \emph{union} of two graphs $G$ and $H$ is the graph,
denoted by $G\cup H$, with vertex set $V(G)\cup V(H)$ and edge set
$E(G)\cup E(H)$. The disjoint union of $k$ copies of the same graph
$G$ is denoted by $kG$. The \emph{join} $G\vee H$ of two disjoint
graphs $G$ and $H$ is obtained from $G\cup H$ by joining each vertex
of $G$ to every vertex of $H$.

In this section, we first introduce a graph operation and two graph
classes.

Let $H$ be a connected graph, and $u$ a vertex of $H$. We define the
{\it attaching operation at the vertex $u$} on $H$ as follows: (1)
identifying $u$ and a vertex of a $K_4$; (2) $u$ is attached with
only one $K_4$. The vertex $u$ is called \emph{an attaching vertex}.

\begin{figure}[h,t,b,p]
\begin{center}
\scalebox{0.8}[0.8]{\includegraphics{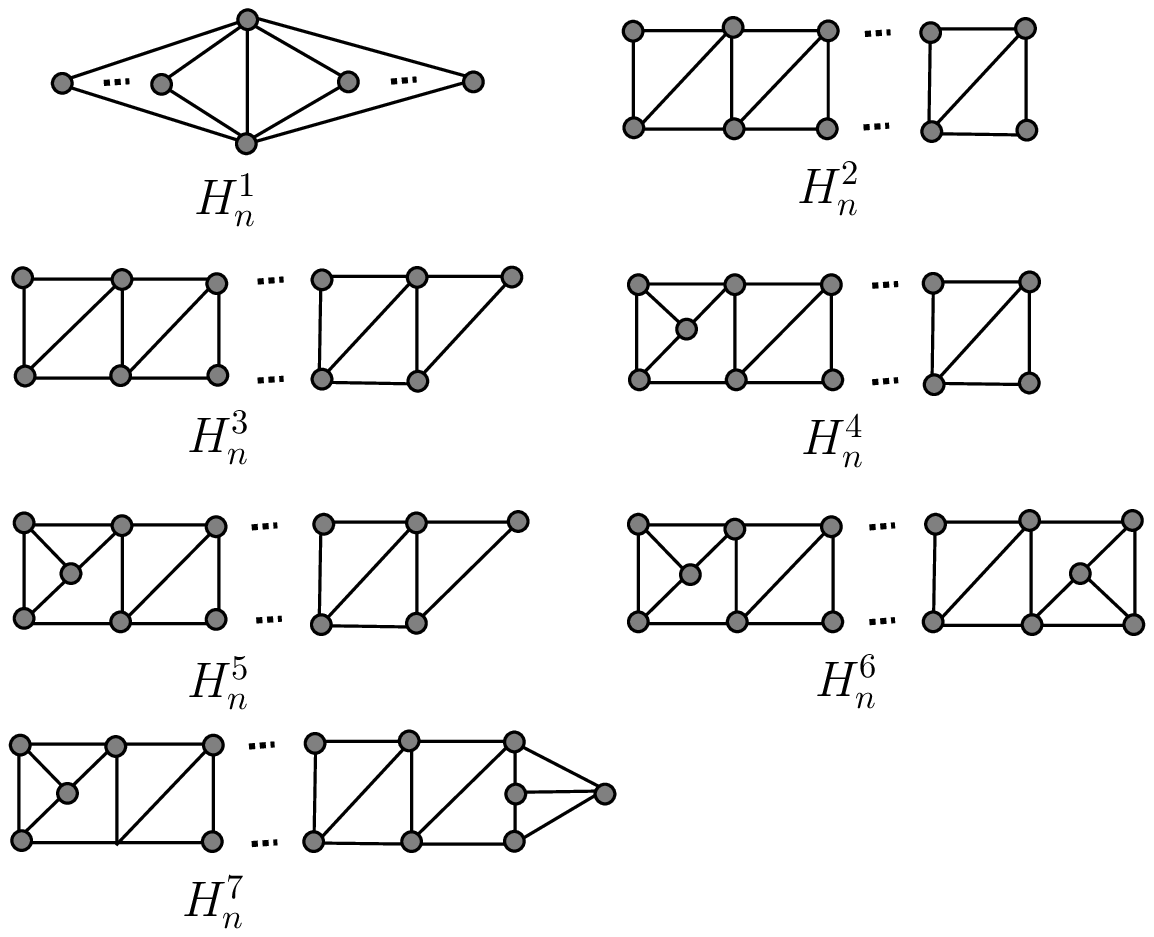}}\\[20pt]
 Figure 1. The graph class $\mathcal{G}_{n}$.
\end{center}\label{fig1}
\end{figure}

Now, we introduce two new graph classes. For $n\geq 5$,
$\mathcal{G}_{n}=\{H_n^1,H_n^2,H_n^3,H_n^4,\linebreak[2]
H_n^5,H_n^6,H_n^7\}$ is a class of graphs of order $n$ (see Figure
$1$ for details). Let $\mathcal{H}_n^{i} \ (1\leq i\leq 7)$ be the
class of graphs, each of them is obtained from a graph $H_r^{i}$ by
the attaching operation at some vertices of degree $2$ on $H_r^{i}$,
where $3\leq r\leq n$ and $1\leq i\leq 7$ (note that $H_n^{i}\in
\mathcal{H}_n^{i}$). $\mathcal{G}_{n}^*$ is another class of graphs
that contains $\mathcal{G}_{n}$, given as follows:
$\mathcal{G}_{3}^*=\{K_3\}$, $\mathcal{G}_{4}^*=\{K_4\}$,
$\mathcal{G}_{5}^*=\{G_1\}\cup (\bigcup_{i=1}^7 \mathcal{H}_5^{i})$,
$\mathcal{G}_{6}^*=\{G_3,G_4\}\cup (\bigcup_{i=1}^7
\mathcal{H}_6^{i})$, $\mathcal{G}_{7}^*=\bigcup_{i=1}^7
\mathcal{H}_7^{i}$, $\mathcal{G}_{8}^*=\{G_2\}\cup (\bigcup_{i=1}^7
\mathcal{H}_8^{i})$, $\mathcal{G}_{n}^*=\bigcup_{i=1}^7
\mathcal{H}_n^{i}$ for $n\geq 9$ (see Figure $2$ for details).

\begin{figure}[h,t,b,p]
\begin{center}
\scalebox{0.8}[0.8]{\includegraphics{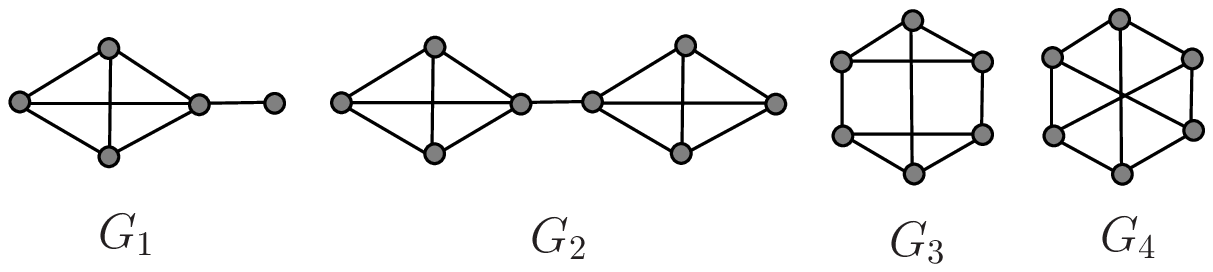}}\\[20pt]

Figure 2. Some graphs in $\mathcal{G}_{n}^*$.
\end{center}\label{fig2}
\end{figure}

It is easy to see that the following three observations hold.

\begin{observation}\label{obs1}
Let $G$ and $H$ be two connected graphs, and $H'$ be a subdivision
of $H$. If $H'$ is a subgraph of $G$ and $\overline{\kappa}_3(H)\geq
3$, then $\overline{\kappa}_3(G)\geq 3$.
\end{observation}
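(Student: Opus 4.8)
The plan is to argue directly from the definition of generalized local connectivity, using the fact that subdividing edges of a tree again yields a tree. Suppose $\overline{\kappa}_3(H)\geq 3$, so there is a set $S=\{x,y,z\}\subseteq V(H)$ with $|S|=3$ and three internally disjoint $S$-Steiner trees $T_1,T_2,T_3$ in $H$; that is, pairwise $E(T_i)\cap E(T_j)=\varnothing$ and $V(T_i)\cap V(T_j)=S$. Since $H'$ is a subdivision of $H$, every edge $e=uv$ of $H$ corresponds in $H'$ to an internally disjoint $u$–$v$ path $P_e$ whose internal vertices (if any) are the new subdivision vertices, and distinct edges of $H$ give paths $P_e$ that are internally vertex-disjoint and share no new vertex. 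First I would replace each $T_i$ by the subgraph $T_i'$ of $H'$ obtained by substituting each edge $e\in E(T_i)$ with the corresponding path $P_e$; because $T_i$ is a tree, $T_i'$ is again a tree, it still contains $S$ (the original vertices of $H$ are retained in $H'$), and the subdivision vertices used inside $T_i'$ are exactly those lying on the paths $P_e$ for $e\in E(T_i)$.

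Next I would check that $T_1',T_2',T_3'$ are internally disjoint $S$-Steiner trees in $H'$. Edge-disjointness is immediate: an edge of $H'$ lies on a unique path $P_e$, and the index sets $E(T_1),E(T_2),E(T_3)$ are disjoint. For the vertex condition $V(T_i')\cap V(T_j')=S$: an original vertex $w\in V(H)$ common to $T_i'$ and $T_j'$ must have been common to $T_i$ and $T_j$, hence lies in $S$; and a subdivision vertex $w$ common to $T_i'$ and $T_j'$ would force the unique edge $e$ of $H$ with $w\in V(P_e)$ to belong to both $E(T_i)$ and $E(T_j)$, contradicting edge-disjointness of the $T_i$. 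Hence $\overline{\kappa}_{H'}(S)\geq 3$, so $\overline{\kappa}_3(H')\geq 3$.

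Finally I would lift this from the subdivision $H'$ to the supergraph $G$. Since $H'\subseteq G$, every $S$-Steiner tree in $H'$ is also an $S$-Steiner tree in $G$, and internal disjointness is preserved under taking supergraphs (the conditions $E(T_i)\cap E(T_j)=\varnothing$ and $V(T_i)\cap V(T_j)=S$ are about the trees themselves, not about $G$). Therefore $T_1',T_2',T_3'$ witness $\kappa_G(S)\geq 3$, and $\overline{\kappa}_3(G)\geq 3$. There is essentially no obstacle here; the only point requiring a little care is the vertex-intersection bookkeeping for the subdivision vertices, i.e., confirming that a shared new vertex would violate edge-disjointness of the original trees, which is exactly where the hypothesis that the $T_i$ are internally disjoint (not merely edge-disjoint) is used.
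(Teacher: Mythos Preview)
Your argument is correct. The paper does not actually give a proof of this observation; it merely states that ``it is easy to see'' that it holds, so there is nothing to compare your approach against beyond noting that you have supplied exactly the routine verification the authors omitted.

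One very minor quibble with your closing commentary: the case of a shared subdivision vertex only uses \emph{edge}-disjointness of the original trees $T_i$ (a new vertex lies on a unique $P_e$, so a common new vertex forces a common edge $e$). The stronger hypothesis $V(T_i)\cap V(T_j)=S$ is what handles the case of a shared \emph{original} vertex of $H$. Your proof itself uses both correctly; only the parenthetical remark at the end attributes the roles slightly inaccurately.
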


\begin{observation}\label{obs2}
Let $H$ be a graph, $u$ and $v$ be two vertices in $H$, and $G$ be a
graph obtained from $H$ by attaching a $K_4$ at $u$. If there are
three internally disjoint paths between $u$ and $v$ in $H$, then
$\overline{\kappa}_3(G)\geq 3$.
\end{observation}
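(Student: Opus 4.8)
The plan is to exhibit one concrete set $S$ of three vertices in $G$ together with three internally disjoint $S$-trees. Write the attached $K_4$ as the complete graph on $\{u,a,b,c\}$, where $a,b,c$ are the three new vertices (so $a,b,c\notin V(H)$, and the subgraph of $G$ induced by $\{u,a,b,c\}$ meets $H$ only in $u$). Let $P_1,P_2,P_3$ be three internally disjoint $u$--$v$ paths in $H$, as supplied by the hypothesis. I would take
\[
S=\{u,v,a\}.
\]

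Next I would write the three trees down explicitly:
\[
T_1=\{ua\}\cup E(P_1),\qquad T_2=\{ab,bu\}\cup E(P_2),\qquad T_3=\{ac,cu\}\cup E(P_3).
\]
Each $T_i$ is in fact a path --- $a-u-\cdots-v$ along $P_1$ for $T_1$, and $a-b-u-\cdots-v$, $a-c-u-\cdots-v$ along $P_2,P_3$ for $T_2,T_3$ --- hence a tree; and it contains $S$, since its vertex set includes $a$, $u$, and (as $P_i$ runs from $u$ to $v$) also $v$.

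Then I would verify internal disjointness. Edge-disjointness is immediate: the edges used inside the $K_4$ are $ua$, $ab$, $bu$, $ac$, $cu$, all distinct, while $E(P_1),E(P_2),E(P_3)$ are pairwise disjoint because $P_1,P_2,P_3$ are internally disjoint. For the vertex condition, fix $i\neq j$: inside the $K_4$ the vertex sets of $T_i$ and $T_j$ meet only in $\{u,a\}$, because each of $b$ and $c$ belongs to exactly one of the three trees; inside $H$ one has $V(P_i)\cap V(P_j)=\{u,v\}$. Hence $V(T_i)\cap V(T_j)=\{u,v,a\}=S$. Therefore $T_1,T_2,T_3$ are three internally disjoint $S$-trees, so $\kappa_G(S)\ge 3$ and thus $\overline{\kappa}_3(G)\ge 3$.

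There is essentially no hard step here; the only thing to get right is the bookkeeping forced by the definition of internally disjoint Steiner trees (pairwise edge-disjoint, and pairwise sharing no vertex outside $S$). The one genuinely choice-dependent move is the selection of $S$: it must contain both the attaching vertex $u$ and the endpoint $v$, together with one vertex of the $K_4$, since the attached $K_4$ meets $H$ only at $u$, so a set $S$ lying entirely inside the $K_4$, or one avoiding $u$ altogether, would admit at most one tree crossing between the two sides and the construction would break down.
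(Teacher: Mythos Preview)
Your argument is correct. The paper states Observation~\ref{obs2} without proof (it is grouped with two other observations under ``It is easy to see that the following three observations hold''), so there is no paper proof to compare against; your construction with $S=\{u,v,a\}$ and the three paths $a\,u\,P_1\,v$, $a\,b\,u\,P_2\,v$, $a\,c\,u\,P_3\,v$ is exactly the natural direct verification, and the disjointness check is accurate.
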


\begin{observation}\label{obs3}
For each graph in Figure $3$, $\overline{\kappa}_3(G)\geq 3$.
\end{observation}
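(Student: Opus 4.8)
The statement concerns a fixed finite list of graphs (those drawn in Figure~$3$), so it suffices to verify the inequality $\overline{\kappa}_3(G)\ge 3$ for each of them individually; there is no asymptotics and no induction involved.

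My plan is as follows. For each graph $G$ in Figure~$3$ I will choose a suitable set $S=\{x,y,z\}\subseteq V(G)$ of three vertices — typically three vertices of large degree that are mutually well separated — and then exhibit three $S$-Steiner trees $T_1,T_2,T_3$ in $G$ by writing down their edge sets explicitly. Having done so, I will check the two conditions in the definition of internally disjoint Steiner trees, namely $E(T_i)\cap E(T_j)=\varnothing$ and $V(T_i)\cap V(T_j)=S$ for all $i\neq j$. Since each such $G$ has only a few vertices, each verification is a short finite check. In fact the choices can usually be organized around one recurring pattern: if the three chosen terminals $x,y,z$ have a common neighbour $u$ and another common neighbour $v$ with $u\neq v$, then $T_2=\{ux,uy,uz\}$ and $T_3=\{vx,vy,vz\}$ are two internally disjoint stars, and it remains to produce a third $S$-tree avoiding both $u$ and $v$; when moreover two of the terminals, say $x$ and $y$, are adjacent and $yz\in E(G)$, or there is a further vertex $w\notin\{u,v\}$ adjacent to all of $x,y,z$, this third tree is immediate.

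Whenever a graph $G$ in Figure~$3$ contains a subdivision of a smaller graph $H$ that is immediately seen to satisfy $\overline{\kappa}_3(H)\ge 3$ — for instance a subdivision of $K_5$, for which $S$ and the three trees above are transparent — I will instead simply invoke Observation~\ref{obs1}; and whenever $G$ is obtained from a graph $H$ by attaching a $K_4$ at a vertex $u$ that is joined to some vertex $v$ of $H$ by three internally disjoint paths, I will invoke Observation~\ref{obs2}. This lets me dispose of most of the cases without writing trees out by hand, leaving only a handful of genuinely ad hoc graphs for the explicit construction.

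The only real work here is bookkeeping, and the single point that needs genuine care is the requirement $V(T_i)\cap V(T_j)=S$ rather than merely $E(T_i)\cap E(T_j)=\varnothing$: it forces the trees to share no internal vertex, so in the graphs of Figure~$3$ with comparatively few vertices one must be careful that the third tree really does avoid the hub vertices already used by $T_2$ and $T_3$. If for some graph no third vertex-disjoint $S$-tree exists for the first natural choice of $S$, I will re-choose $S$ — the figure is small, so only a few triples need be tried — before falling back on the explicit construction. I expect this final check, confirming that the exhibited trees are pairwise \emph{vertex}-disjoint off $S$, to be the main, though entirely routine, obstacle.
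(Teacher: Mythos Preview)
Your plan is correct and matches the paper's treatment: Observation~\ref{obs3} is stated without proof precisely because the graphs $A_1,\dots,A_6$ of Figure~3 are six explicit graphs on six vertices, and the intended verification is the direct case check you describe. The only remark is that Observation~\ref{obs2} will not actually be needed here (none of these graphs arises by attaching a $K_4$), so the work reduces entirely to your explicit three-tree constructions.
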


\begin{figure}[h,t,b,p]
\begin{center}
\scalebox{0.8}[0.8]{\includegraphics{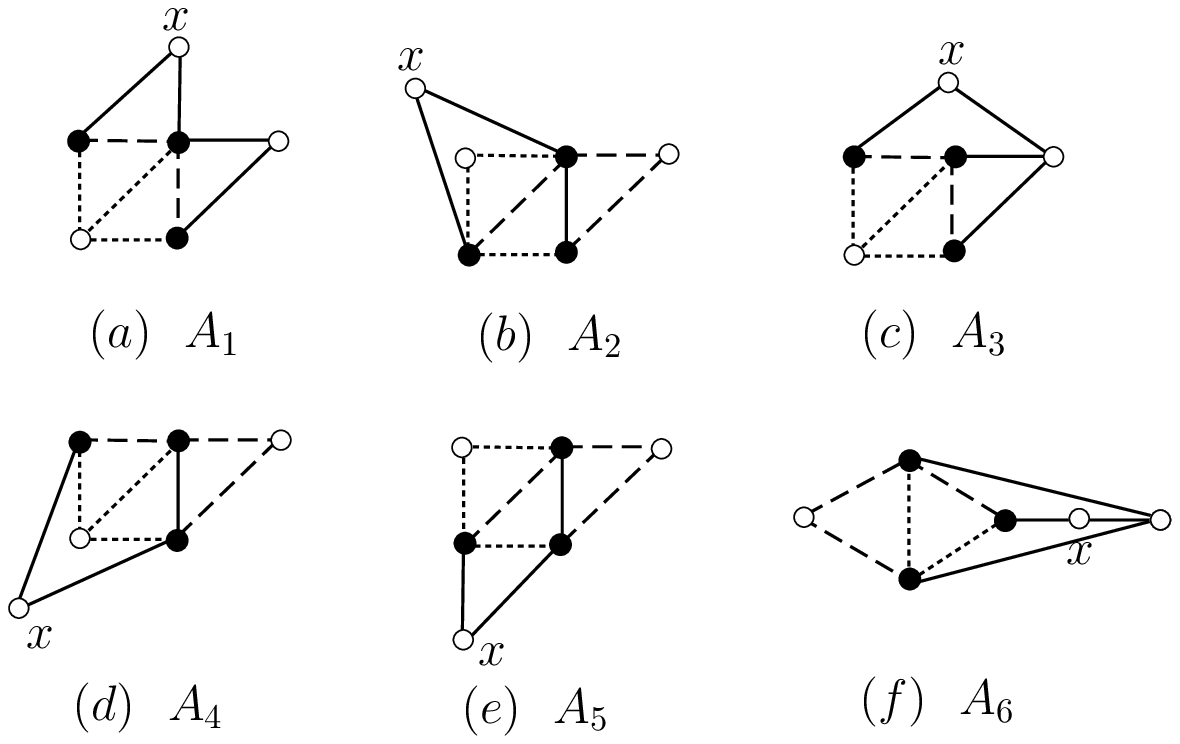}}\\
Figure 3. Graphs obtained from $H_5^3$ by adding a vertex of degree
$2$.
\end{center}
\end{figure}

\begin{lemma}\label{lem1}
Let $G$ be a graph containing a clique $K_4$. If there exists a path
connecting two vertices of $K_4$ in $G\setminus E[K_4]$, then
$\overline{\kappa}_3(G)\geq 3$.
\end{lemma}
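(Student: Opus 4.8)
The plan is to fix $V(K_4)=\{a,b,c,d\}$, normalize the given path, and then simply exhibit three pairwise internally disjoint Steiner trees connecting $S=\{a,b,c\}$. For the normalization step, note that by hypothesis there is at least one path of $G\setminus E[K_4]$ whose two end vertices lie in $\{a,b,c,d\}$; among all such paths I would choose one, $P$, of minimum length. If some internal vertex of $P$ belonged to $\{a,b,c,d\}$, then the segment of $P$ from one endpoint up to that internal vertex would be a strictly shorter path of $G\setminus E[K_4]$ joining two vertices of the clique, contradicting minimality. Hence $P$ has no internal vertex in $\{a,b,c,d\}$. Relabel the clique so that $P$ runs from $a$ to $b$; since the only edges among $\{a,b,c,d\}$ in a simple graph are those of $E[K_4]$, the path $P$ has length at least $2$, so it does possess internal vertices, all lying outside the clique.

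Next I would take $S=\{a,b,c\}$ and write down three $S$-trees: $T_1$, the star with centre $d$ and edge set $\{da,db,dc\}$; $T_2$, the path $a$--$b$--$c$ with edge set $\{ab,bc\}$; and $T_3=P+ac$, obtained from $P$ by appending the edge $ac$ (legitimate since $c\notin V(P)$), which is a tree containing $a,b,c$. Then I would verify pairwise internal disjointness. For edges: $E(T_1)$ and $E(T_2)$ are disjoint subsets of $E[K_4]$, whereas $E(T_3)=E(P)\cup\{ac\}$ meets $E[K_4]$ only in $\{ac\}$, which lies in neither $E(T_1)$ nor $E(T_2)$. For vertices: $V(T_1)=\{a,b,c,d\}$, $V(T_2)=\{a,b,c\}$, and $V(T_3)=\{a,b,c\}\cup(V(P)\setminus\{a,b\})$; since $d\notin V(P)$ and $V(P)\setminus\{a,b\}$ is disjoint from $\{a,b,c\}$, every pairwise intersection equals exactly $S$. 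This gives $\kappa_G(S)\geq 3$, hence $\overline{\kappa}_3(G)\geq 3$.

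I do not anticipate a genuine obstacle; the one point requiring care is precisely the normalization of $P$, because a path joining two clique vertices might a priori pass through the other two, in which case the star $T_1$ centred at $d$ would collide with $T_3$. The minimum-length choice removes exactly this difficulty and lets a single uniform construction work, avoiding a case analysis on which of $c,d$ lie on $P$. As an alternative packaging one may observe that $K_4\cup P$ is a subdivision of the (simple) graph obtained from $K_4$ by adding one extra vertex joined to two vertices of the clique, check via the three trees above that this graph has maximum generalized local connectivity at least $3$, and then invoke Observation~\ref{obs1}; but the direct argument is self-contained and equally short.
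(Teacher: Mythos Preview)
Your proof is correct and follows essentially the same construction as the paper's: take $S$ to be three of the four clique vertices and exhibit three $S$-trees consisting of a two-edge path inside the clique, the star centred at the fourth clique vertex, and $P$ together with the one remaining clique edge. Your minimum-length normalization of $P$ is in fact more careful than the paper's argument, which tacitly assumes the chosen path has no internal vertex in $\{a,b,c,d\}$.
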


\begin{proof}
Let $K_4$ be a complete subgraph of $G$ with vertex set
$\{u_1,\cdots,u_4\}$, and $P$ be a path connecting $u_1$ and $u_2$
in $G\setminus E[K_4]$. It suffices to show that there exists a set
$S$ such that $\kappa(S)\geq 3$.  Choose $S=\{u_1,u_2,u_3\}$,
clearly, $T_1=u_1u_2\cup u_1u_3$ and $T_2=u_4u_1\cup u_4u_2\cup
u_4u_3$ and $T_3=P\cup u_2u_3$ form three internally disjoint
$S$-trees. Thus, $\overline{\kappa}_3(G)\geq 3$.
\end{proof}

Similarly, the following lemma holds.

\begin{lemma}\label{lem2}
Let $G$ be a graph obtained from $H_5^4$ by adding a vertex $x$ and
two edges $xy, xz$, where $y,z\in V(H_5^4)$(see Figure $4$). Then
$\overline{\kappa}_3(G)\geq 3$ or $G=H_6^{5}$.
\end{lemma}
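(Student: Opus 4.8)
The plan is to argue directly from the structure of $H_5^4$, doing a short case analysis on where the two new edges $xy,xz$ attach the pendant-type vertex $x$. First I would fix a concrete drawing and labelling of $H_5^4$ with its five vertices, recording in particular which vertices have degree $2$ (these are the candidates that, together with $x$, will sit at the "outside" of the configuration) and which pairs of vertices of $H_5^4$ are joined by three internally disjoint paths inside $H_5^4$ itself. The governing principle is Observation~\ref{obs1} together with the general fact used in Observations~\ref{obs2} and~\ref{obs3}: if the graph $G$ contains, as a subgraph, a subdivision of some graph $H$ with $\overline{\kappa}_3(H)\ge 3$, then $\overline{\kappa}_3(G)\ge 3$; and adding a degree-$2$ vertex $x$ adjacent to $y,z$ is exactly subdividing (or adding a parallel route between) $y$ and $z$. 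So for each unordered pair $\{y,z\}\subseteq V(H_5^4)$ I want either to exhibit three internally disjoint $S$-trees directly for a well-chosen $S$ (typically $S=\{x,y,z\}$ or $S$ a triple inside a $K_4$-like part), or to recognise that $H_5^4 + x$ with the new edges is a subdivision of one of $H_6^1,\dots,H_6^7$ or of $H_5^4$ itself with an extra parallel edge, reducing to an earlier observation.

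The case analysis splits naturally by $\deg_G(x)=2$, so the only freedom is the pair $\{y,z\}$: there are $\binom{5}{2}=10$ pairs, but the automorphisms of $H_5^4$ collapse these to a handful of orbits. For each orbit I would check whether adding the $x$-route creates a subdivision of a graph already known (by Observation~\ref{obs3} or Lemma~\ref{lem1}, since $H_5^4$ contains a $K_4$-minus-an-edge or similar dense piece) to have $\overline{\kappa}_3\ge 3$; in those cases Observation~\ref{obs1} finishes it. The pairs $\{y,z\}$ for which this fails should be precisely the ones where $x$ gets attached "in the wrong place", and I expect exactly one such configuration, namely the one producing $H_6^5$ — this is why the statement is an "or". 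So concretely: enumerate the orbits of pairs; for all but one, build the three trees or cite the subdivision; for the last, verify by direct inspection that the resulting graph is isomorphic to $H_6^5$.

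The step I expect to be the genuine obstacle is the bookkeeping in the borderline pair(s): near the "exceptional" attachment, adding $x$ still produces a graph extremely close to one with $\overline{\kappa}_3\ge 3$, so I must be careful to distinguish, up to isomorphism, the configuration that equals $H_6^5$ from its near-neighbours that are subdivisions of $H_6^5$-plus-an-edge (which would then have $\overline{\kappa}_3\ge 3$ and should be sorted into the first alternative). Getting the isomorphism $G\cong H_6^5$ exactly right — rather than $G$ being a proper supergraph or a different $H_6^i$ — is the crux; everything else is the routine exhibition of three internally disjoint trees, which mirrors the computation already done in the proof of Lemma~\ref{lem1}. A secondary, purely clerical hazard is that the figure ``Figure~4'' is referenced but not reproduced in this excerpt, so in writing the proof I would re-derive the needed adjacency data of $H_5^4$ from its appearance in Figure~1 rather than relying on the missing picture.
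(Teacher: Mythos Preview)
Your proposal is correct and matches the paper's approach: the paper gives no explicit proof beyond ``Similarly, the following lemma holds'' together with Figure~4, which simply enumerates (up to isomorphism) the graphs $B_1,\dots,B_6$ and $H_6^5$ arising from the possible attachments of $x$, leaving the reader to check $\overline{\kappa}_3\ge 3$ in each $B_i$ exactly as you describe. One small correction to your expectations: the case split in Figure~4 yields seven configurations rather than ``a handful'' collapsing to one exceptional pair plus a single generic orbit, so your enumeration will need to handle six nontrivial $B_i$'s individually (this is later used verbatim in the proof of Lemma~\ref{lem6}).
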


\begin{figure}[h,t,b,p]
\begin{center}
\scalebox{0.8}[0.8]{\includegraphics{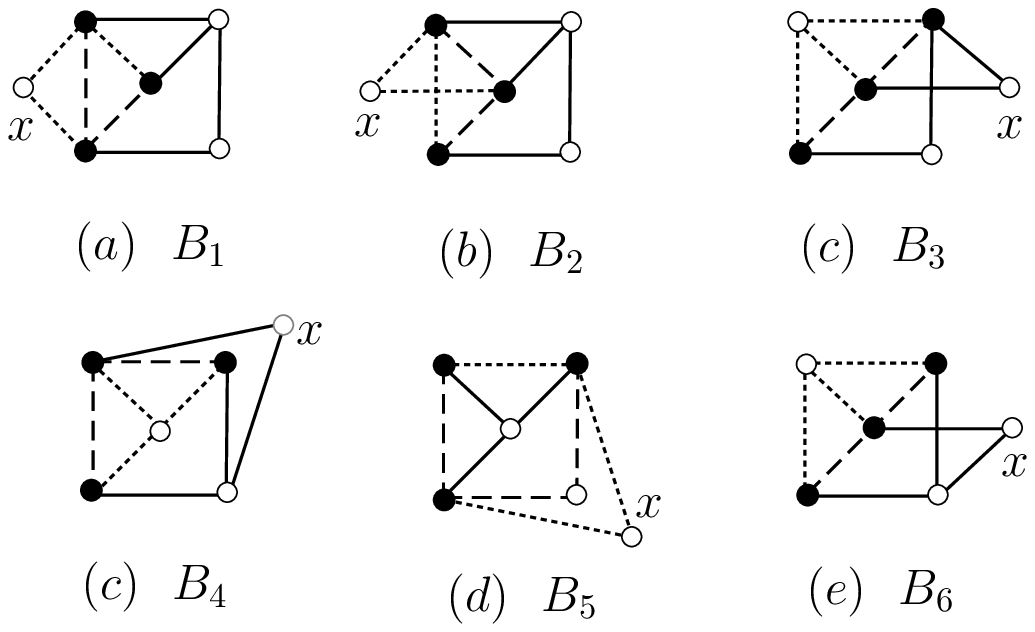}}\\
Figure 4. Graphs obtained from $H_5^4$ by adding a vertex of degree
$2$.
\end{center}
\end{figure}

\begin{lemma}\label{lem3}
For any connected graph $G$ with order $5$ and size $8$,
$\overline{\kappa}_3(G)\geq 3$.
\end{lemma}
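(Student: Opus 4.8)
The starting point is the observation that a graph of order $5$ and size $8$ is precisely $K_5$ with two edges deleted; call these two deleted edges $e$ and $f$. (Such a graph is automatically connected, since a disconnected graph on five vertices has at most $\binom{4}{2}=6<8$ edges.) The complement of $G$ therefore consists of exactly the two edges $e$ and $f$ on five vertices, so we are in one of two cases: either $e$ and $f$ share an endpoint, or $e$ and $f$ are independent. I would handle these separately, in each case producing a single set $S$ of three vertices together with three pairwise internally disjoint $S$-trees, which by definition suffices.

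Case~1: $e$ and $f$ are adjacent. Write $e=v_1v_2$ and $f=v_1v_3$, so $v_1$ is adjacent in $G$ exactly to $v_4$ and $v_5$. Then $v_2,v_3,v_4,v_5$ are mutually adjacent, i.e.\ they span a $K_4$ in $G$, and both edges $v_1v_4$ and $v_1v_5$ lie in $G\setminus E[K_4]$, so $v_4v_1v_5$ is a path joining two vertices of this $K_4$ in $G\setminus E[K_4]$. Lemma~\ref{lem1} then gives $\overline{\kappa}_3(G)\geq 3$ immediately.

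Case~2: $e$ and $f$ are independent. Write $e=v_1v_2$ and $f=v_3v_4$; note $G$ now contains no $K_4$, so Lemma~\ref{lem1} does not apply and I would argue directly. Take $S=\{v_1,v_2,v_5\}$. Since the only non-edges of $G$ are $v_1v_2$ and $v_3v_4$, all eight edges $v_1v_5,v_2v_5,v_1v_3,v_2v_3,v_3v_5,v_1v_4,v_2v_4,v_4v_5$ are present, so $T_1=v_1v_5\cup v_2v_5$, $T_2=v_1v_3\cup v_2v_3\cup v_3v_5$ and $T_3=v_1v_4\cup v_2v_4\cup v_4v_5$ are three $S$-trees; they have pairwise disjoint edge sets and their pairwise vertex intersections are exactly $S$ (the internal vertices being none, $v_3$ and $v_4$ respectively), so $\kappa_G(S)\geq 3$ and hence $\overline{\kappa}_3(G)\geq 3$.

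The whole argument amounts to bookkeeping, and I do not anticipate any real obstacle. The only things that require a little care are checking that the two cases are genuinely exhaustive and that every edge invoked in the constructions really is present in $G$; the result is intended only as a small building block toward the extremal determination of $f(n;\overline{\kappa}_3\leq 2)$.
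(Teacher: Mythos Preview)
Your proof is correct and essentially the same as the paper's. The paper organises the case split via $\delta(G)\in\{2,3\}$, which is equivalent to your split on whether the two missing edges of $K_5$ share an endpoint; in both arguments the first case invokes Lemma~\ref{lem1} on the surviving $K_4$, and the second case exhibits the same three stars in the wheel $W_5$ (with the hub and two non-adjacent rim vertices as $S$).
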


\begin{proof}
We claim that $2\leq \delta(G)\leq 3$. In fact, if $\delta(G)=1$,
without loss of generality, let $d(x)=1$, then $|V(G-x)|=4$ and
$e(G-x)=7$, a contradiction. If $\delta(G)\geq 4$, then
$16=2e(G)\geq 5\delta \geq 20$, a contradiction.

If $\delta(G)=2$, without loss of generality, let $d(x)=2$, then
$|V(G-x)|=4$ and $e(G-x)=6$, which implies that $G-x$ is a clique of
order $4$. From Lemma \ref{lem1}, $\overline{\kappa}_3(G)\geq 3$. So
we suppose that $\delta(G)=3$. Since $|V(G)|=5$, $\Delta(G)\leq 4$.
Since $\frac{2e(G)}{|V(G)|}=\frac{16}{5}$, there exists a vertex $x$
in $G$ such that $d(x)=4$. Set $N_G(x)=\{u_1,u_2,u_3,u_4\}$. Since
$\delta(G-x)\geq 2$ and $e(G-x)=4$, $G-x$ is a cycle of order $4$.
Then $G$ is a wheel of order $5$ and the trees $T_1=xu_2\cup xu_4$
and $T_2=u_3x\cup u_3u_2\cup u_3u_4$ and $T_3=u_1x\cup u_1u_4\cup
u_1u_2$ form $3$ internally disjoint $\{x,u_2,u_4\}$-trees, namely,
$\overline{\kappa}_3(G)\geq 3$.\end{proof}

\begin{lemma}\label{lem4}
For any connected graph $G$ of order $5$ and size $7$,
$\overline{\kappa}_3(G)\leq 2$ and $G\in \{G_1,H_5^1,H_5^3,H_5^4\}$.
\end{lemma}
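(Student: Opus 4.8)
The plan is to establish the two assertions separately: for the bound $\overline{\kappa}_3(G)\le 2$ I will use a direct edge-counting argument (which in fact works for every graph of order $5$ and size $7$), and for the structural part I will run a case analysis on $\delta(G)$ in the spirit of the proof of Lemma~\ref{lem3}.

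\textbf{Step 1 (the bound).} Suppose, for contradiction, that some set $S=\{x,y,z\}$ admits three internally disjoint $S$-trees $T_1,T_2,T_3$; in particular these are pairwise edge-disjoint. Each $T_i$ is a tree with $S\subseteq V(T_i)\subseteq V(G)$, so $|E(T_i)|=|V(T_i)|-1\in\{2,3,4\}$. If $|E(T_i)|=2$ then $V(T_i)=S$, so $T_i$ is one of the at most three paths on the three vertices of $S$, whose edge-sets are the three $2$-subsets of $\{xy,xz,yz\}$; since any two $2$-subsets of a $3$-set intersect, at most one of $T_1,T_2,T_3$ can have exactly two edges. Hence $|E(T_1)|+|E(T_2)|+|E(T_3)|\ge 2+3+3=8>7=|E(G)|$, which is impossible because the $T_i$ are edge-disjoint subgraphs of $G$. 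Therefore $\overline{\kappa}_3(G)\le 2$.

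\textbf{Step 2 (the classification).} Since $G$ is connected, $\delta(G)\ge 1$; and $\delta(G)\ge 3$ would give $14=2e(G)\ge 3\cdot 5=15$, a contradiction, so $\delta(G)\in\{1,2\}$. If $\delta(G)=1$, take $x$ with $d(x)=1$; then $|V(G-x)|=4$ and $e(G-x)=6$, so $G-x=K_4$ and $G$ is $K_4$ with a pendant vertex (degree sequence $(4,3,3,3,1)$). If $\delta(G)=2$, take $x$ with $d(x)=2$; then $|V(G-x)|=4$ and $e(G-x)=5$, so $G-x=K_4-e$. Let $rs$ be the missing edge of $G-x$ and $p,q$ the remaining two vertices. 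Then $N_G(x)$ is a $2$-subset of $\{p,q,r,s\}$, and up to the automorphisms of $K_4-e$ there are exactly three choices: $N_G(x)=\{p,q\}$, $N_G(x)=\{p,r\}$, or $N_G(x)=\{r,s\}$, with degree sequences $(4,4,2,2,2)$, $(4,3,3,2,2)$, $(3,3,3,3,2)$ respectively. These four pairwise non-isomorphic graphs exhaust all connected graphs of order $5$ and size $7$; comparing with Figures~1 and~2 they are precisely $G_1,H_5^1,H_5^3,H_5^4$. By Step~1 each of them satisfies $\overline{\kappa}_3\le 2$, which finishes the proof.

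\textbf{Where the work is.} There is no genuine obstacle here. Step~1 is a short counting argument, and Step~2 is routine once one observes that the $\delta(G)$-split forces $G-x$ to be $K_4$ or $K_4-e$. The only points needing a little care are verifying that the three neighborhoods listed for $x$ when $\delta(G)=2$ are exhaustive up to isomorphism (the distinct degree sequences then show the graphs are non-isomorphic), and reading off the correct labels $G_1,H_5^1,H_5^3,H_5^4$ from the figures.
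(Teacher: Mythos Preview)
Your proof is correct and follows essentially the same approach as the paper: the edge-counting argument in Step~1 is exactly the paper's Type~I/Type~II argument (you are just slightly more explicit about why at most one tree can have two edges), and the $\delta(G)$-case split in Step~2 mirrors the paper's proof, with the added detail of distinguishing the three $\delta(G)=2$ graphs by their degree sequences.
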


\begin{proof}
For each $S\subseteq V(G)$ with $|S|=3$, a tree with two edges
connecting $S$ is called {\it Type $I$}, and the others with at
least $3$ edges are called {\it Type $II$}. One can see that three
internally disjoint trees connecting $S$ will use at least $8$ edges
since we only have one tree of Type $I$. So if $G$ is a connected
graph of order $5$ and size $7$, then $\overline{\kappa}_3(G)\leq
2$.

Suppose that $\delta(G)\geq 3$. Then $14=2e(G)\geq 5\delta \geq 15$,
a contradiction. Thus, $\delta(G)\leq 2$. If $\delta(G)=1$, without
loss of generality, let $d(x)=1$, then $|V(G-x)|=4$ and $e(G-x)=6$,
which implies that $G-x$ is a clique of order $4$. Then $G=G_1$ (see
Figure $2$).

\begin{figure}[h,t,b,p]
\begin{center}
\scalebox{0.8}[0.8]{\includegraphics{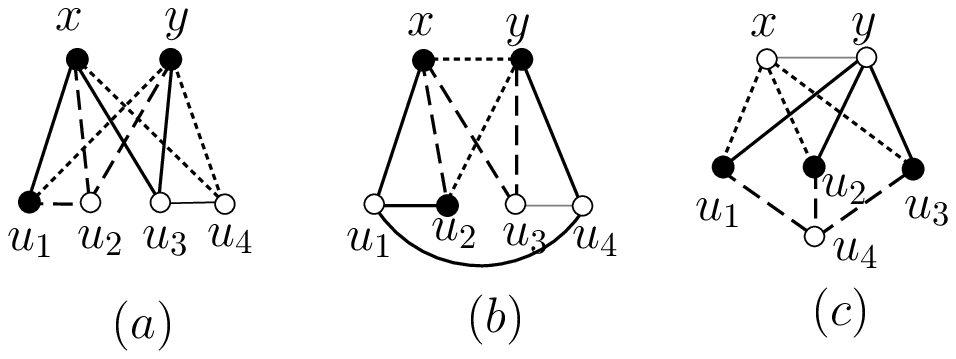}}\\

Figure 5. Graphs for Lemma \ref{lem5}.
\end{center}
\end{figure}

If $\delta(G)=2$, without loss of generality, let $d(x)=2$, then
$|V(G-x)|=4$ and $e(G-x)=5$, which implies that $G-x$ is graph
obtained from $K_4$ by deleting an edge. Thus, $G\in
\{H_5^1,H_5^3,H_5^4\}$ (see Figure 1).
\end{proof}

\begin{lemma}\label{lem5}
For any connected graph $G$ with order $6$ and size $10$,
$\overline{\kappa}_3(G)\geq 3$.
\end{lemma}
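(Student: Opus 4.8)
The plan is to argue by cases on the minimum degree $\delta(G)$, repeatedly using that $\overline{\kappa}_3$ does not decrease when edges or vertices are added, so it suffices to exhibit either a suitable subgraph or a concrete set $S$ of three vertices carrying three internally disjoint $S$-trees. Since $2e(G)=20<6\cdot 4$, we have $\delta(G)\le 3$. If $\delta(G)=1$, deleting the pendant vertex leaves a graph of order $5$ with $9$ edges, i.e.\ $K_5$ minus an edge $e$, and three internally disjoint Steiner trees on the three vertices not meeting $e$ are immediate (two stars centered at the endpoints of $e$, plus a star on the third side). If $\delta(G)=2$, deleting a vertex of degree $2$ leaves a graph of order $5$ and size $8$, which a one-line count shows is connected, so Lemma \ref{lem3} applies. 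In either case $\overline{\kappa}_3(G)\ge 3$.

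So assume $\delta(G)=3$. As all six degrees lie in $\{3,4,5\}$ and sum to $20$, the degree sequence is $(5,3,3,3,3,3)$ or $(4,4,3,3,3,3)$. In the first case the degree-$5$ vertex $v$ dominates $G$ and $G-v$ is $2$-regular of order $5$, hence a $5$-cycle $u_1u_2u_3u_4u_5u_1$, so $G$ is the wheel on $6$ vertices; then, with $S=\{v,u_1,u_3\}$, the trees $T_1=vu_1\cup vu_3$, $T_2=vu_2\cup u_1u_2\cup u_2u_3$ and $T_3=vu_4\cup u_3u_4\cup vu_5\cup u_1u_5$ are three internally disjoint $S$-trees, exactly in the spirit of the wheel argument in Lemma \ref{lem3}.

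It remains to handle the degree sequence $(4,4,3,3,3,3)$; let $x,y$ be the vertices of degree $4$ and $a,b,c,d$ those of degree $3$. If $x\not\sim y$, then each of $x,y$ is adjacent to all of $a,b,c,d$, so the subgraph induced on $\{a,b,c,d\}$ is $1$-regular, i.e.\ a perfect matching, and up to isomorphism $G$ is $K_{2,4}$ with parts $\{x,y\}$ and $\{a,b,c,d\}$ together with the edges $ab$ and $cd$; here $S=\{x,y,a\}$ works via $T_1=ax\cup xc\cup cy$, $T_2=ay\cup yd\cup dx$, $T_3=ab\cup bx\cup by$. If $x\sim y$, then $x$ and $y$ have $2$ or $3$ common neighbours among $\{a,b,c,d\}$, and in each sub-case the residual degrees of $a,b,c,d$ pin $G$ down up to isomorphism; one then exhibits three internally disjoint $S$-trees, e.g.\ with $S=\{x,y,d\}$ and three stars centered at $a,b,c$ when the common neighbourhood is $\{a,b,c\}$, and with $S=\{x,y,a\}$ in the remaining sub-case. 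This exhausts all cases and gives $\overline{\kappa}_3(G)\ge 3$.

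The routine part is the reduction for $\delta(G)\le 2$, where deleting a vertex of minimum degree drops us into the setting of Lemma \ref{lem3}, together with the degree-$5$ case. The genuine work is the degree sequence $(4,4,3,3,3,3)$: here one cannot simply delete a vertex, since removing a degree-$3$ vertex yields a graph of order $5$ and size $7$, the regime of Lemma \ref{lem4} in which $\overline{\kappa}_3\le 2$, so nothing is gained; instead one checks that fixing the two degree-$4$ vertices forces the graph into one of only a handful of shapes. The guiding principle for the Steiner-tree constructions is that a degree-$3$ vertex placed in $S$ forces the three trees to use its three incident edges one apiece, which essentially dictates the trees; what needs care is only verifying that the trees so obtained are simultaneously edge-disjoint and meet pairwise in exactly $S$.
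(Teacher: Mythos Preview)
Your argument is correct and mirrors the paper's proof: reduce $\delta(G)\le 2$ to Lemma~\ref{lem3} (you handle $\delta=1$ directly rather than via Lemma~\ref{lem3}, but this is immaterial), treat the wheel when there is a degree-$5$ vertex, and for degree sequence $(4,4,3,3,3,3)$ split on whether the two degree-$4$ vertices are adjacent and, if so, on the size of their common neighbourhood---exactly the paper's three cases in Figure~5, since ``$N_{G-xy}(x)=N_{G-xy}(y)$'' is the same as ``three common neighbours''. The one place you are terser than the paper is the two-common-neighbour sub-case, where you name $S=\{x,y,a\}$ without writing out the three trees; since the graph is uniquely determined there and trees such as $T_1=ax\cup xy$, $T_2=ay\cup yb\cup bx$, $T_3=ac\cup cx\cup cd\cup dy$ are immediate, this is not a real gap.
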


\begin{proof}
If there exists a vertex $x\in V(G)$ such that $d(x)\leq 2$, then
$|V(G-x)|=5$ and $e(G-x)\geq 8$. From Lemma \ref{lem3},
$\overline{\kappa}_3(G-x)\geq 3$, which results in
$\overline{\kappa}_3(G)\geq 3$.

Now we assume that $\delta(G)\geq 3$. If there exists a vertex $x\in
V(G)$ such that $d(x)=5$, then $|V(G-x)|=5$ and $e(G-x)=5$. Since
$\delta(G-x)\geq 2$, $G-x$ is a cycle of order $5$, which implies
that $G$ is wheel of order $6$. Clearly, $\overline{\kappa}_3(G)\geq
3$. So we can assume that $\Delta(G)\leq 4$. Let $t$ be the number
of vertices of degree $4$ in $G$. Since $20=2e(G)=4t+3(6-t)$, $t=2$,
namely, there exist two vertices $x,y\in V(G)$ such that
$d(x)=d(y)=4$.

If $xy\notin E(G)$, then $G$ must be the graph shown in Figure 5
$(a)$ since $\delta(G)\geq 3$. Then the trees $T_1=u_2x\cup u_2y\cup
u_2u_1$ and $T_2=u_1x\cup xu_3\cup u_3y$ and $T_3=u_1y\cup yu_4\cup
u_4x$ form three $\{x,y,u_1\}$-trees, namely,
$\overline{\kappa}_3(G)\geq 3$.

If $xy\in E(G)$ and $N_{G-xy}(x)\neq N_{G-xy}(y)$, then $G$ must be
the graph shown in Figure 5 $(b)$ since $\delta(G)\geq 3$. Then the
trees $T_1=u_2x\cup xu_3\cup u_3y$ and $T_2=yx\cup yu_2$ and
$T_3=u_1x\cup u_1u_2\cup u_1u_4\cup u_4y$ form three
$\{x,y,u_2\}$-trees, namely, $\overline{\kappa}_3(G)\geq 3$.

If $xy\in E(G)$ and $N_{G-xy}(x)=N_{G-xy}(y)$, then $G$ must be the
graph shown in Figure 5 $(c)$ since $\delta(G)\geq 3$. Then the
trees $T_1=xu_1\cup xu_2\cup xu_3$ and $T_2=yu_1\cup yu_2\cup yu_3$
and $T_3=u_4u_1\cup u_4u_2\cup u_4u_3$ form three
$\{u_1,u_2,u_3\}$-trees, namely, $\overline{\kappa}_3(G)\geq
3$.\end{proof}

\begin{lemma}\label{lem6}
Let $G$ be a connected graph of order $6$ and size $9$. If
$\overline{\kappa}_3(G)\leq 2$, then $G\in \{G_3,G_4\}$ or $G\in
\{H_6^1,H_6^2,H_6^5\}$ or $G\in \mathcal{H}_6^{3}$.
\end{lemma}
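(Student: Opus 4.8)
The plan is to case-split according to the minimum degree of $G$, in the spirit of the proofs of Lemmas~\ref{lem3}--\ref{lem5}. Since $2e(G)=18$, a minimum degree at least $4$ would force $18\geq 6\,\delta(G)\geq 24$, so $\delta(G)\leq 3$. If $\delta(G)=1$, choose $x$ with $d(x)=1$; then $G-x$ is connected (deleting a leaf), of order $5$ and size $8$, so Lemma~\ref{lem3} gives $\overline{\kappa}_3(G-x)\geq 3$, whence $\overline{\kappa}_3(G)\geq 3$ because $G-x$ is a subgraph of $G$ --- contradicting the hypothesis. So it remains to treat $\delta(G)=2$ and $\delta(G)=3$.

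Consider first $\delta(G)=3$. Then $2e(G)=18=3\cdot 6$ forces $G$ to be cubic, and there are exactly two connected cubic graphs of order $6$: the complete bipartite graph $K_{3,3}$ and the triangular prism (two disjoint triangles joined by a perfect matching). For $K_{3,3}$, taking $S$ to be one part of the bipartition, the three stars centred at the vertices of the other part are pairwise internally disjoint $S$-trees, so $\overline{\kappa}_3(K_{3,3})\geq 3$, a contradiction. For the prism I would run a short case analysis on the $3$-set $S$: up to the symmetries of the prism, $S$ is a triangular face, or two adjacent vertices of one triangle together with the vertex of the other triangle matched to one of them, or two vertices of one triangle together with the remaining vertex of the other triangle. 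In each case only three Steiner vertices and nine edges are available, and a counting argument (no Steiner vertex of the prism is adjacent to enough vertices of $S$ for several economical trees to coexist) shows $\overline{\kappa}_3\leq 2$. Hence here $G$ is the prism, which is $G_3$ or $G_4$.

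Now suppose $\delta(G)=2$; this carries the bulk of the work. Fix $x$ with $d(x)=2$ and $N_G(x)=\{y,z\}$. If $G-x$ were disconnected, its $5$ vertices and $7$ edges would split into parts admitting at most $\binom{4}{2}=6$ or $\binom{3}{2}+\binom{2}{2}=4$ edges, which is impossible; so $G-x$ is connected of order $5$, size $7$, with $\overline{\kappa}_3(G-x)\leq 2$, and Lemma~\ref{lem4} gives $G-x\in\{G_1,H_5^1,H_5^3,H_5^4\}$. I would then reinsert $x$ in each case. If $G-x=G_1$ (a $K_4$ with a pendant vertex), then unless $x$ and the pendant vertex both attach at the same vertex of the $K_4$ (creating a triangle there), the two new edges yield a path joining two vertices of the $K_4$ inside $G\setminus E[K_4]$, so Lemma~\ref{lem1} gives $\overline{\kappa}_3(G)\geq 3$, a contradiction; the only survivor is $K_4$ with a triangle attached at a vertex, a member of $\mathcal{H}_6^{3}$. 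If $G-x=H_5^4$, Lemma~\ref{lem2} applies directly and yields $\overline{\kappa}_3(G)\geq 3$ (contradiction) or $G=H_6^5$. If $G-x=H_5^3$ or $G-x=H_5^1$, I would enumerate, up to isomorphism, the ways of appending a degree-$2$ vertex; each resulting graph with $\overline{\kappa}_3\geq 3$ is discarded using Observation~\ref{obs3} (for $H_5^3$), Lemma~\ref{lem1} (whenever a $K_4$ is created), or an explicit triple of internally disjoint $S$-trees; what remains is exactly the list of graphs in $\{G_3,G_4,H_6^1,H_6^2,H_6^5\}\cup\mathcal{H}_6^{3}$ having a degree-$2$ vertex whose deletion returns $H_5^3$, respectively $H_5^1$.

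The main obstacle is this last step: the careful, isomorphism-respecting enumeration of the graphs obtained from $H_5^1$ and $H_5^3$ by appending a degree-$2$ vertex, and verifying for each that either a suitable $S$ admits three internally disjoint $S$-trees or the graph is one of the asserted exceptional graphs. Observations~\ref{obs1}--\ref{obs3} and Lemmas~\ref{lem1}--\ref{lem2} do most of this, but a handful of configurations still must be checked by hand; a secondary, smaller point is confirming $\overline{\kappa}_3\leq 2$ for the triangular prism in the cubic case.
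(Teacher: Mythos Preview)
Your proposal follows essentially the same route as the paper's own proof: bound $\delta(G)$ between $2$ and $3$ (ruling out $\delta\geq 4$ by the handshake count and $\delta=1$ via Lemma~\ref{lem3}), handle the $3$-regular case directly, and for $\delta=2$ delete a degree-$2$ vertex $x$, apply Lemma~\ref{lem4} to get $G-x\in\{G_1,H_5^1,H_5^3,H_5^4\}$, and then analyse the re-insertion case by case using Lemmas~\ref{lem1}--\ref{lem2} and Observation~\ref{obs3}. The paper is simply terser: in the cubic case it just asserts ``it is easy to check that $G=G_3$ or $G=G_4$'', and in the $\delta=2$ subcases it names the finitely many exceptional graphs $A_1,\ldots,A_6$ (Figure~3) and $B_1,\ldots,B_6$ (Figure~4) that Observation~\ref{obs3} and Lemma~\ref{lem2} were set up precisely to eliminate, rather than redoing the enumeration you sketch.
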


\begin{proof}
We claim that $2\leq \delta(G)\leq 3$. Suppose that $\delta(G)\geq
4$. Then $18=2e(G)\geq 6\delta \geq 24$, a contradiction. Suppose
that $\delta(G)=1$, without loss of generality, let $d(x)=1$, then
$|V(G-x)|=5$ and $e(G-x)=8$. From Lemma \ref{lem3},
$\overline{\kappa}_3(G-x)\geq 3$. Clearly,
$\overline{\kappa}_3(G)\geq 3$ by Observation \ref{obs1}.

If $\delta(G)=3$, then $G$ is $3$-regular. It is easy to check that
$G=G_3$ or $G=G_4$. In the following, we assume that $\delta(G)=2$.
Without loss of generality, set $d(x)=2$, then $|V(G-x)|=5$ and
$e(G-x)=7$, which implies that $G-x=G_1$ or $G-x\in
\{H_5^1,H_5^3,H_5^4\}$ by Lemma \ref{lem4}.

If $G-x=G_1$, then $G\in \mathcal{H}_6^{3}$. If $G-x=H_5^1$, then
$G=H_6^1$ or $G=A_2$ or $G=A_6$ (see Figure 3), which results in
$G=H_6^1$. If $G-x=H_5^3$, then $G=H_6^2$ or $G\in
\{A_1,A_2,A_3,A_4,A_5\}$, which implies that $G=H_6^2$ by
Observation \ref{obs3}. If $G-x=H_5^4$, then $G=H_6^5$ or $G\in
\{B_1,B_2,B_3,B_4,B_5,B_6\}$, which implies that $G=H_6^5$ by Lemma
\ref{lem2}.
\end{proof}

\section{Main results}

In this section, we give our main results. We first need some more
lemmas. In Lemma \ref{lem3} through Lemma \ref{lem6}, we have dealt
with the cases $n\leq 6$. Now we assume that $n\geq 7$.

\begin{lemma}\label{lem7}
Let $G'$ be a graph obtained from $G$ by deleting a vertex of degree
$2$. If $G'\in \mathcal {G}_{n-1}^*$($n\geq 7$), then $G\in \mathcal
{G}_{n}^*$ or $\overline{\kappa}_3(G)\geq 3$.
\end{lemma}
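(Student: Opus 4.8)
The plan is to induct on the structure revealed by the vertex of degree $2$ that is deleted. Let $x$ be the vertex of $G$ with $d_G(x)=2$ such that $G'=G-x$, and suppose $G'\in\mathcal G_{n-1}^*$; we want to conclude $G\in\mathcal G_n^*$ unless $\overline{\kappa}_3(G)\ge 3$. Since $\mathcal G_{n-1}^*=\bigcup_{i=1}^7\mathcal H_{n-1}^i$ for $n-1\ge 9$ (with a few exceptional small cases to be handled separately), the graph $G'$ is obtained from some base graph $H_r^i$ by attaching copies of $K_4$ at vertices of degree $2$. I would organize the proof around \emph{where the two neighbours $y,z$ of $x$ sit} in $G'$: (a) both in an attached $K_4$; (b) one in an attached $K_4$ and one on the base $H_r^i$; (c) both on the base $H_r^i$ (or in the ``backbone'' part not inside any attached $K_4$).

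For case (a): if $y,z$ lie in a common attached $K_4$, then together with $x$ they form a graph on $5$ vertices containing $K_4$ plus a path (through $x$) joining two of its vertices, so Lemma~\ref{lem1} gives $\overline{\kappa}_3(G)\ge 3$ immediately; if $y,z$ lie in two different attached $K_4$'s, then there is a path connecting two vertices of one of these $K_4$'s in $G\setminus E[K_4]$ (route it through $x$ and the rest of the graph, which is connected), again invoking Lemma~\ref{lem1}. For case (b), the attached $K_4$ containing $y$ has three internally disjoint $y$–$v$ paths inside it for any other vertex $v$ of that $K_4$, and these extend (via $x$ and via $G'$) to show the hypothesis of Observation~\ref{obs2} is met, so $\overline{\kappa}_3(G)\ge 3$. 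The genuinely delicate case is (c): $x$ is attached to two backbone vertices $y,z$ of degree $2$ in the current $H_r^i$. Here adding $x$ with edges $xy,xz$ either (i) creates a subdivision of one of the seven base graphs $H_s^j$ of larger order with $\overline{\kappa}_3(H_s^j)\ge 3$ — caught by Observation~\ref{obs1}; or (ii) the resulting backbone is again (a subdivision of) one of $H_n^1,\dots,H_n^7$ with a vertex of degree $2$ subdivided, in which case $G\in\bigcup_i\mathcal H_n^i=\mathcal G_n^*$; or (iii) it produces one of the sporadic graphs $G_1,G_2,G_3,G_4$ listed in $\mathcal G_n^*$ for $n\in\{5,6,8\}$. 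Concretely this amounts to checking, for each $i$ and each unordered pair $\{y,z\}$ of degree-$2$ vertices of $H_r^i$, what $H_r^i + x + \{xy,xz\}$ is; Lemmas~\ref{lem2} and~\ref{lem3}–\ref{lem6} and Observations~\ref{obs2}–\ref{obs3} dispose of all the ``near-miss'' configurations that arise from $H_5^3$ and $H_5^4$.

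The main obstacle is the bookkeeping in case (c): one must verify that the seven base graphs $H_r^i$ are closed (up to subdivision, and up to the finitely many exceptional graphs $G_1,\ldots,G_4$) under the operation ``add a degree-$2$ vertex adjacent to two existing degree-$2$ vertices,'' and that every configuration \emph{not} closed under this operation in fact has $\overline{\kappa}_3\ge 3$. I would handle this by reducing, via Observation~\ref{obs1}, to the case where $G'$ is actually one of the base graphs $H_r^i$ with $r$ small (since subdividing edges of the backbone neither helps nor hurts the argument), so that only a bounded case analysis remains; the attaching structure on $G'$ is then carried along untouched because $x$'s neighbours are backbone vertices of degree $2$, which by construction are exactly the candidate attaching vertices. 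Finally I would separately dispatch the small exceptional orders $n\in\{7,8\}$ where $\mathcal G_{n-1}^*$ includes $\{G_3,G_4\}$, $\{G_1\}$, or $\{G_2\}$, using Lemmas~\ref{lem5} and~\ref{lem6} to see that adding a degree-$2$ vertex to $G_1,G_3,G_4$ either lands in $\mathcal G_n^*$ or pushes $\overline{\kappa}_3$ up to $3$.
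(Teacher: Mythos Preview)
Your three–case split on where the neighbours of $x$ sit is essentially how the paper organises things too (the paper just groups first by which $\mathcal H_{n-1}^i$ contains $G'$ and then, inside each group, separates out the $K_4$-hitting cases via Lemma~\ref{lem1}). So the architecture is fine. There are, however, two genuine problems in your case~(c).

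\medskip
\textbf{The neighbours in the base need not have degree $2$.} You write that in case~(c) ``$x$ is attached to two backbone vertices $y,z$ of degree~$2$ in the current $H_r^i$'' and then reduce to checking that the family $\{H_r^i\}$ is closed under ``add a degree-$2$ vertex adjacent to two existing degree-$2$ vertices.'' But nothing forces $y,z$ to have degree~$2$ in $H_r^i$: most vertices of $H_r^2,\dots,H_r^7$ have degree~$3$ or~$4$. The paper's Case~2 makes this explicit, sub-casing on $d_{H_r^2}(x_1),d_{H_r^2}(x_2)\in\{2,3,4\}$; it is precisely the degree-$3$ and degree-$4$ subcases that force the forbidden configurations, and in the degree-$2$ subcase one must still argue (via Observation~\ref{obs2}) that neither neighbour is an attaching vertex before concluding $G\in\mathcal G_n^*$. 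Your proposal skips all of this.

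\medskip
\textbf{The witnesses for $\overline{\kappa}_3\ge 3$ are not the $H_s^j$.} In (c)(i) you say $G$ ``creates a subdivision of one of the seven base graphs $H_s^j$ \dots\ with $\overline{\kappa}_3(H_s^j)\ge 3$.'' But every $H_s^j$ lies in $\mathcal G_s^*$ and so has $\overline{\kappa}_3\le 2$; containing a subdivision of one of them tells you nothing. What actually happens (and what the paper does) is that $G$ contains a subdivision of one of the small forbidden graphs $A_1,\dots,A_6$ of Figure~3 or $B_1,\dots,B_6$ of Figure~4, each of which has $\overline{\kappa}_3\ge 3$; then Observation~\ref{obs1} finishes. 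Your bookkeeping in (c) needs to identify these specific configurations, not the $H_s^j$.

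\medskip
A smaller point: your case~(b) appeal to Observation~\ref{obs2} is not set up correctly. Observation~\ref{obs2} needs three internally disjoint $u$--$v$ paths in the graph \emph{after removing} the attached $K_4$, not three paths inside the $K_4$; once you delete the three non-attaching vertices of that $K_4$, the vertex $x$ drops to degree~$1$ and you are back to the $2$-connected base, which does not in general give three paths. The clean tool for (a) and (b) is Lemma~\ref{lem1}: if some neighbour of $x$ is a non-attaching vertex of an attached $K_4$, the path through $x$ and the rest of $G'$ returns to the attaching vertex of that same $K_4$, and Lemma~\ref{lem1} applies directly. The paper uses Observation~\ref{obs2} only later, inside case~(c), to rule out the possibility that a degree-$2$ neighbour of $x$ is simultaneously an attaching vertex.
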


\begin{proof}
Let $x$ be the deleted vertex of degree $2$ in $G$. Since $n\geq 7$,
$G'\notin \{K_3,K_4,G_1\}$. From Observation \ref{obs2} and Lemma
\ref{lem1}, if $G'\in \{G_2,G_3,G_4\}$, then we can check that $G\in
\mathcal{H}_9^{3}$ or $\overline{\kappa}_3(G)\geq 3$. From now on,
we consider $G'\in \mathcal {G}_{n-1}^*\setminus \{G_2,G_3,G_4\}$.

\textbf{Case 1. } $G'\in \mathcal{H}_{n-1}^{1}$.

First we consider the case that there is no $K_4$ in $G'$. Thus,
$G'=H_{n-1}^{1}$. Since $n\geq 7$, $G=H_{n}^{1}\in
\mathcal{H}_{n}^{1}$ or $G$ must contain an $A_2$ or $A_6$ as its
subgraph, which implies that $G\in \mathcal {G}_{n}^{*}$ or
$\overline{\kappa}_3(G)\geq 3$ by Observation \ref{obs1}.

Next we consider the case that there exists at least one $K_4$ in
$G'$. For each $K_4$, if $N_G(x)\cap (K_4\setminus y)\neq
\varnothing$, then we have $\overline{\kappa}_3(G)\geq 3$ by Lemma
$1$, where $y$ is an attaching vertex in $G'$. Suppose that
$N_G(x)\cap (K_4\setminus y)=\varnothing$ for all $K_4\subseteq G'$.
Clearly, we can consider the graph $G'\in \mathcal{H}_{n-1}^{1}$ as
the join of $K_2$ and $r$ isolated vertices, and then doing the
attaching operation at some vertices of degree $2$ on $K_2\vee
rK_1$. So, we consider $N(x)\subseteq K_2\vee rK_1 \ (r\geq 1)$. For
$r\geq 3$, it follows that $G\in \mathcal{H}_{n}^{1}$ or $G$
contains the graph $A_2$ or $A_6$ as its subgraph, which implies
that $G\in \mathcal {G}_{n}^{*}$ or $\overline{\kappa}_3(G)\geq 3$.

For $r=2$, from Lemma $1$, we only need to consider $N(x)\subseteq
V(K_2\vee 2K_1)$. By Observation \ref{obs2}, $G\in
\mathcal{H}_{11}^{1}$ or $G\in \mathcal{H}_8^{1}$ or $G\in
\mathcal{H}_8^{3}$ or $\overline{\kappa}_3(G)\geq 3$. For $r=1$,
$K_2\vee K_1$ is a triangle and $G'$ is a graph obtained from this
triangle by the attaching operation at two or three vertices of this
triangle since $n\geq 7$. Thus, from Observation \ref{obs2} and
Lemma \ref{lem1}, we can get $\overline{\kappa}_3(G)\geq 3$.

\textbf{Case 2. } $G'\in \mathcal{H}_{n-1}^{2}$ or $G'\in
\mathcal{H}_{n-1}^{3}$.

We only prove the conclusion for $G'\in \mathcal{H}_{n-1}^{2}$, the
same can be showed for $G'\in \mathcal{H}_{n-1}^{3}$ similarly.
Without loss of generality, let $\mathcal{H}_{n-1}^{2}$ be the graph
class obtained from $H_{r}^{2}$ by the attaching operation at some
vertices of degree $2$ on $H_{r}^{2}$, where $r=n-1,n-4,n-7$. One
can see that $u_1$ and $v_{\frac{r}{2}}$ can be the attaching
vertices. From Lemma \ref{lem1}, we only need to consider the case
that $N_G(x)\subseteq H_{r}^{2}$. Set $N_G(x)=\{x_1,x_2\}$. Thus
$x_1,x_2\in V(H_{r}^{2})$.

If $d_{H_{r}^{2}}(x_1)=d_{H_{r}^{2}}(x_2)=2$, without loss of
generality, let $x_1=u_1$ and $x_2=v_{\frac{r}{2}}$, then neither
$u_1$ nor $v_{\frac{r}{2}}$ is an attaching vertex by Observation
\ref{obs2}. We can choose a path $P:=u_3u_4\cdots
u_{\frac{r}{2}}v_{\frac{r}{2}}xu_1$ connecting $u_1$ and $u_3$ in
$G\setminus \{u_2,v_1,v_2\}$. Thus, $G$ contains a subdivision of
$A_3$ as its subgraph (see Figures 3 and 6 $(a)$), which results in
$\overline{\kappa}_3(G)\geq 3$.

If $d_{H_{r}^{2}}(x_1)=2$ and $d_{H_{r}^{2}}(x_2)=3$, without loss
of generality, let $x_1=u_1$, then we can find a path connecting
$u_1$ and $u_3$ and obtain $\overline{\kappa}_3(G)\geq 3$ for
$x_2\in H_r\setminus \{u_2,v_1,v_2\}$. For $x_2=u_2$ and $x_2=v_2$,
$G$ contains an $A_1$ and $A_4$ as its subgraph, which implies
$\overline{\kappa}_3(G)\geq 3$. If $x_2=v_1$, then $G\in
\mathcal{H}_n^3$ and so $G\in \mathcal {G}_n^*$.

\begin{figure}[h,t,b,p]
\begin{center}
\scalebox{0.8}[0.8]{\includegraphics{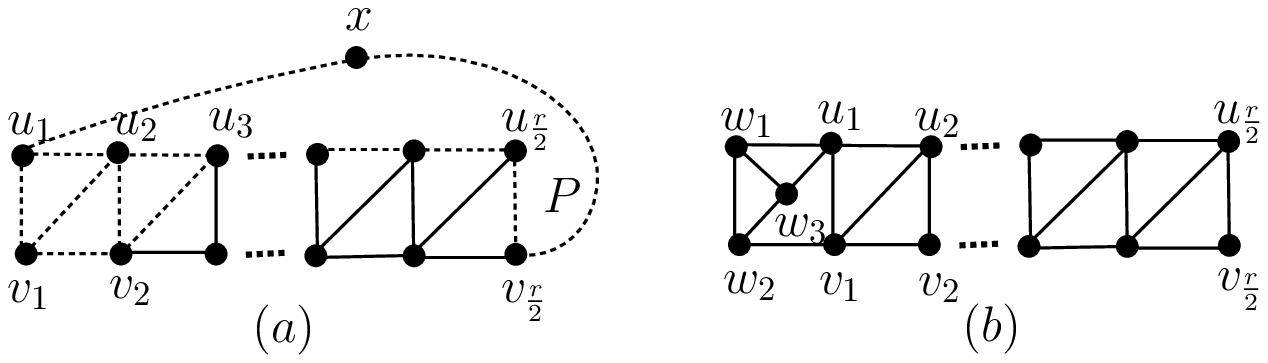}}\\
Figure 6. Graphs for Lemma \ref{lem7}.
\end{center}\label{fig6}
\end{figure}

For $3\leq d_{H_{r}^{2}}(x_i)\leq 4 \ (i=1,2)$, one can check that
$G$ contains a subdivision of one of $\{A_1,A_2,\cdots,A_5\}$, which
implies $\overline{\kappa}_3(G)\geq 3$.

\textbf{Case 3. } $G'\in \mathcal{H}_{n-1}^{4}$ or $G'\in
\mathcal{H}_{n-1}^{5}$.

Note that only $v_{\frac{r}{2}}$ can be an attaching vertex in
$H_{r}^{4}$ (see Figure 6 $(b)$), where $r=n-1,n-4$. From Lemma
\ref{lem1}, we only need to consider $N(x)\subseteq H_{r}^{4}$. We
can consider $H_{r}^{4}$ as a graph obtained from $H_{5}^{4}$ and
$H_{r-3}^{2}$ by identifying one edge $u_1v_1$ in each of them.

If $N(x)\cap \{w_1,w_2,w_3\}\neq \varnothing$, then $G$ contains a
subdivision of one of $\{B_1,\cdots,B_6\}$ as its subgraph. So,
$\overline{\kappa}_3(G)\geq 3$ by Lemma \ref{lem2}. Now we can
assume that $N(x)\cap \{w_1,w_2,w_3\}=\varnothing$. For
$|\{u_1,v_1\}\cap N(x)|= 2$, $G$ contains an $A_2$ as its subgraph,
which results in $\overline{\kappa}_3(G)\geq 3$. For
$|\{u_{\frac{r}{2}},v_{\frac{r}{2}}\}\cap N(x)|= 2$, if
$v_{\frac{r}{2}}$ is not an attaching vertex in $H_{r}^{4}$, then
$G\in \mathcal{H}_{n}^{5}$; if $v_{\frac{r}{2}}$ is an attaching
vertex in $H_{r}^{4}$, then $\overline{\kappa}_3(G)\geq 3$ by
Observation \ref{obs2}. For the other cases, we can also check that
$\overline{\kappa}_3(G)\geq 3$.

\textbf{Case 4. } $G'\in \mathcal{H}_{n-1}^{6}$ or $G'\in
\mathcal{H}_{n-1}^{7}$.

From the above Case $2$ and Lemma \ref{lem2}, we can get
$\overline{\kappa}_3(G)\geq 3$ in this case.
\end{proof}

Similarly, we have the following lemma.

\begin{lemma}\label{lem8}
Let $G'$ be a graph obtained from $G$ by deleting a vertex of degree
$3$. If $G'\in \mathcal{G}_{n-1}^*$($n\geq 7$), then
$\overline{\kappa}_3(G)\geq 3$.
\end{lemma}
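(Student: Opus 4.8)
The plan is to follow the same deletion-and-case-analysis strategy as in Lemma \ref{lem7}, but now the deleted vertex $x$ has degree $3$, so I would need to control the three neighbours $N_G(x) = \{x_1, x_2, x_3\}$ rather than two. First I would record the easy preliminary reductions: since $n \geq 7$ we have $G' \notin \{K_3, K_4, G_1\}$, and by Lemma \ref{lem1} (respectively Observation \ref{obs2}) whenever two neighbours of $x$ lie inside a copy of $K_4$ attached to $G'$, or more generally whenever adding $x$ together with its three pendant edges creates a path in $G \setminus E[K_4]$ joining two vertices of some $K_4$ in $G'$, we immediately get $\overline{\kappa}_3(G) \geq 3$. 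This lets me assume $N_G(x)$ avoids the interiors of all attached $K_4$'s and reduces the problem, for each $i$, to placing the three edges from $x$ into the ``core'' graph $H_r^i$ (or into $G_2, G_3, G_4$ for the sporadic cases).

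Next I would organise the argument by the same four cases on which class $\mathcal{H}_{n-1}^i$ contains $G'$, exactly mirroring Lemma \ref{lem7}. In each case the key observation is that $H_r^i$ already contains three internally disjoint paths between a suitable pair of vertices (it is built from $K_2 \vee rK_1$, or from $H_5^3$, $H_5^4$, etc., each of which has maximum local connectivity close to $3$), so attaching $x$ with three edges to the core almost always either creates a subdivision of one of the forbidden configurations $A_1,\dots,A_5$, $B_1,\dots,B_6$ — whence $\overline{\kappa}_3(G) \geq 3$ by Observations \ref{obs1}, \ref{obs3} and Lemmas \ref{lem1}, \ref{lem2} — or else supplies directly three internally disjoint $S$-trees for an explicitly chosen $S$ of size $3$ (as in the wheel computations of Lemmas \ref{lem3}, \ref{lem5}). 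The sporadic subcase $G' \in \{G_2, G_3, G_4\}$ I would handle first by the same reasoning used in Lemma \ref{lem7}: these are $3$-regular or near-$3$-regular graphs of small order, and adding a degree-$3$ vertex to any of them pushes the generalized local connectivity up to $3$; this can be checked by exhibiting the trees explicitly.

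The reason the conclusion here is cleaner than in Lemma \ref{lem7} (no surviving ``$G \in \mathcal{G}_n^*$'' alternative) is that every graph in $\mathcal{G}_n^*$ has a vertex of degree $2$ used as an attaching vertex or as the ``$u_1$'' / ``$v_{r/2}$'' endpoint, never a ``new'' vertex of degree exactly $3$ joined to three core vertices; so adding such an $x$ can never land us back inside $\mathcal{G}_n^*$, and the only possible outcome is $\overline{\kappa}_3(G) \geq 3$. I would make this precise by inspecting Figure 1 and Figure 2 and noting that in each $H_r^i$ the degree-$3$ vertices are pinned down by the structure. The main obstacle I anticipate is purely bookkeeping: there are $\binom{|V(H_r^i)|}{3}$-style choices for where the three edges $xx_1, xx_2, xx_3$ go, and for each case one must either name the forbidden subdivision or write down the three trees. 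The essential content is small — it reuses Lemmas \ref{lem1}, \ref{lem2} and Observations \ref{obs1}–\ref{obs3} verbatim — so I would present it compactly, treating $\mathcal{H}_{n-1}^2$ as the representative of Case 2, $\mathcal{H}_{n-1}^4$ as the representative of Case 3, and reducing Case 4 to Cases 2 and 3 via Lemma \ref{lem2}, exactly as the phrase ``Similarly, we have the following lemma'' in the excerpt signals.
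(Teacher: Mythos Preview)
Your proposal is correct and matches the paper's intended approach exactly: the paper gives no proof of Lemma~\ref{lem8} beyond the phrase ``Similarly, we have the following lemma,'' signalling precisely the case-by-case analysis parallel to Lemma~\ref{lem7} that you outline. Your edge-count heuristic for why the ``$G\in\mathcal{G}_n^*$'' alternative disappears is also sound (a graph in $\mathcal{G}_n^*$ with $n\geq 5$ has $2n-3$ edges, so deleting a degree-$3$ vertex leaves $2(n-1)-4$ edges, too few to lie in $\mathcal{G}_{n-1}^*$), and the rest is, as you say, bookkeeping reusing Observations~\ref{obs1}--\ref{obs3} and Lemmas~\ref{lem1}--\ref{lem2}.
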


\begin{lemma}\label{lem9}
Let $G$ be a graph obtained from $G'$ by deleting an edge $e=x_1x_2$
and adding a vertex $x$ such that $N_G(x)=\{x_1,x_2,x_3\}$, where
$x_3\in V(G')\setminus \{x_1,x_2\}$. If $G'\in
\mathcal{G}_{n-1}^{*}$($n\geq 7$), then $G\in \mathcal{G}_{n}^{*}$
or $\overline{\kappa}_3(G)\geq 3$.
\end{lemma}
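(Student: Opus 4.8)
The plan is to mimic the structure of the proof of Lemma \ref{lem7}, carrying out a case analysis on which member of $\mathcal{G}_{n-1}^{*}$ the graph $G'$ is. Here $G$ is obtained from $G'$ by a slightly different local move: we delete an edge $x_1x_2$ of $G'$ and insert a new vertex $x$ adjacent to $x_1$, $x_2$, and one further vertex $x_3$. Note that this operation can be viewed as first subdividing the edge $x_1x_2$ by the vertex $x$ (which does not change $\overline{\kappa}_3$ in either direction for the purpose of Observation \ref{obs1}) and then adding the extra edge $xx_3$; so morally $G$ contains a subdivision of $G'$ plus one chord, and our task is to decide when that extra chord already forces three internally disjoint $S$-trees.

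First I would dispose of the small/exceptional members $G_2, G_3, G_4$ of $\mathcal{G}_{n-1}^{*}$ (recall $G'\notin\{K_3,K_4,G_1\}$ since $n\geq 7$): each of these contains a $K_4$, and since $n\geq 7$ the vertex $x_3$, together with $x$, produces a path in $G\setminus E[K_4]$ joining two vertices of that $K_4$ unless $x_1,x_2,x_3$ are confined to a single copy of $K_4$ that has been attached at a cut vertex; a short check then puts $G$ into $\mathcal{H}_9^3$ or gives $\overline{\kappa}_3(G)\geq 3$ by Lemma \ref{lem1}. From now on I would assume $G'\in\bigcup_{i=1}^{7}\mathcal{H}_{n-1}^{i}$ and split into the same four cases as Lemma \ref{lem7} (Case 1: $i=1$; Case 2: $i=2,3$; Case 3: $i=4,5$; Case 4: $i=6,7$). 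In each case, Lemma \ref{lem1} lets me assume $N_G(x)\setminus\{x\}$, i.e. the three vertices $x_1,x_2,x_3$, avoids the interior of any attached $K_4$, so $x_1,x_2,x_3$ all lie on the ``backbone'' graph $H_r^{i}$. Then I would run through the possible degree patterns of $x_1,x_2,x_3$ in $H_r^{i}$: if all three are among the low-degree (degree-$2$) backbone vertices that are not attaching vertices, the subdivision-plus-chord either reproduces a larger backbone graph of the same type $i$ (so $G\in\mathcal{G}_n^{*}$) or creates a subdivision of one of the forbidden configurations $A_1,\dots,A_5$, $B_1,\dots,B_6$ of Figures 3, 4, 6, whence $\overline{\kappa}_3(G)\geq 3$ by Observations \ref{obs1}, \ref{obs3} and Lemma \ref{lem2}; if some $x_i$ has higher degree, or $x_3$ is ``far'' from $\{x_1,x_2\}$ on the backbone, one directly exhibits three internally disjoint paths between two backbone vertices and invokes Observation \ref{obs2} (via an attached $K_4$) or Lemma \ref{lem1}.

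The main obstacle, just as in Lemma \ref{lem7}, is the bookkeeping in Cases 2 and 3: there the backbone $H_r^{i}$ is a long ``ladder-like'' graph built by gluing $H_5^{4}$ and copies of $H_{r'}^{2}$ along an edge, it has several families of degree-$2$ vertices (some attaching vertices, some not), and deleting $x_1x_2$ while adding the chord $xx_3$ can land inside a single rung, across two adjacent rungs, or span a long stretch of the ladder. One must show that the only outcomes that do \emph{not} create a subdivision of an $A$- or $B$-graph are exactly the ones that extend the ladder by one more rung, i.e. give a member of $\mathcal{H}_n^{2}$, $\mathcal{H}_n^{3}$, or $\mathcal{H}_n^{5}$ — and in particular that the troublesome symmetric case that produced the sporadic exception $H_6^{5}$ in Lemma \ref{lem2} cannot arise for $n\geq 7$, because here $G$ has one more vertex and an extra edge, which is enough to force three internally disjoint paths. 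I expect the argument to be entirely finite and mechanical once the right path between two high-degree backbone vertices is written down in each subcase, so I would present it as ``one can check that $G$ contains a subdivision of one of $A_1,\dots,B_6$, hence $\overline{\kappa}_3(G)\geq 3$, unless $N_G(x)$ is placed so that $G\in\mathcal{G}_n^{*}$,'' leaving the subdivision exhibits to the figures, exactly in the style already adopted for Lemmas \ref{lem7} and \ref{lem8}.
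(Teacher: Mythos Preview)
Your outline has a genuine gap at the step ``Lemma~\ref{lem1} lets me assume $x_1,x_2,x_3$ avoid the interior of any attached $K_4$.'' This is fine when only $x_3$ lies in such an interior (the $K_4$ survives in $G$, and $x_3 x x_1\cdots y$ gives the required external path), but it fails precisely when the deleted edge $e=x_1x_2$ is itself an edge of an attached $K_4$. In that case the $K_4$ is destroyed in $G$, so Lemma~\ref{lem1} has nothing to act on, and you cannot simply push $x_1,x_2$ back onto the backbone $H_r^{i}$. This is not a triviality: showing $\overline{\kappa}_3(G)\geq 3$ here requires building three internally disjoint $S$-trees by hand, and the construction depends on whether $x_3$ lies in the same clique, whether one of $x_1,x_2$ is the attaching vertex, and on the two internally disjoint paths that $2$-connectivity of $H_r^{i}$ provides from $x_3$ to that attaching vertex. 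The paper isolates exactly this as a separate claim (``if $e\in E(K_4)$ then $\overline{\kappa}_3(G)\geq 3$'') and proves it in three subcases before touching the backbone.

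Structurally, the paper also organizes the remaining analysis differently from your plan: rather than splitting by which $\mathcal{H}_{n-1}^{i}$ contains $G'$ and then by the degree pattern of $x_1,x_2,x_3$, it splits by the \emph{location of the edge} $e$ --- first $e\in E(K_4)$ (handled once, uniformly over all $i$), then $e\in E(H_r^{i})$. For the latter it treats only $i=1$ in detail (cases $r\geq 5$, $r=4$, $r=3$, producing subdivisions of the $C_j$'s or landing in $\mathcal{H}_n^{2},\mathcal{H}_n^{3},\mathcal{H}_n^{4}$) and asserts the other $i$ are analogous. Your Lemma~\ref{lem7}-style case split by $i$ would also go through once the $e\in E(K_4)$ claim is in place, but it duplicates work that the paper's organization avoids.
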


\begin{proof}

Since $n\geq 7$, $G'\notin \{K_3,K_4,G_1\}$. From Observation
\ref{obs2} and Lemma \ref{lem1}, if $G'\in \{G_2,G_3,G_4\}$, we can
easily check that $\overline{\kappa}_3(G)\geq 3$. Thus we consider
$G'\in \mathcal {G}_{n-1}^*\setminus \{G_2,G_3,G_4\}$.

We claim that if there exists a $K_4$ in $G'$ such that $e\in
E(K_4)$, then $\overline{\kappa}_3(G)\geq 3$. Let
$V(K_4)=\{u_1,u_2,u_3,u_4\}$. Without loss of generality, let
$x_1=u_2$ and $x_2=u_4$.

If $x_3\in V(K_4)$, then $x_3=u_1$ or $x_3=u_3$. It follows that
$\overline{\kappa}_3(G)\geq 3$ (see Figure 7 $(a)$). So we assume
that $x_3\notin V(K_4)$. From Lemma \ref{lem1}, if $x_3$ belongs to
another clique of order $4$ such that $x_3$ is not an attaching
vertex, then $\overline{\kappa}_3(G)\geq 3$. So, we only need to
consider $x_3\in H_r^i(1\leq i\leq 7)$. If neither $u_2$ nor $u_4$
is an attaching vertex, then $u_1$ or $u_3$ is an attaching vertex,
say $u_1$. Then there must exist a path $P$ connecting $x_3$ and
$u_1$ such that $u_2,u_3,u_4\notin V(P)$ since $H_r^i(1\leq i\leq
7)$ is connected. Then the trees $T_1=xu_2\cup xu_4\cup P$ and
$T_2=u_1u_2\cup u_1u_4$ and $T_3=u_3u_1\cup u_3u_2\cup u_3u_4$ form
three $\{u_1,u_2,u_4\}$-trees, namely, $\overline{\kappa}_3(G)\geq
3$ (see Figure 7 $(b)$).
\begin{figure}[h,t,b,p]
\begin{center}
\scalebox{0.9}[0.9]{\includegraphics{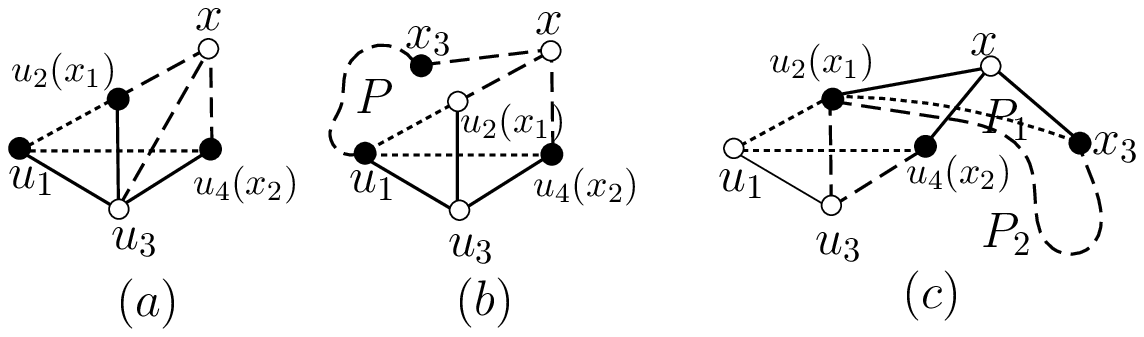}}\\
Figure 7. Graphs for the claim.
\end{center}
\end{figure}

Suppose that one of $\{u_2,u_4\}$ is an attaching vertex, say $u_2$.
Thus there must exist two paths $P_1$ and $P_2$ connecting $x_3$ and
$u_2$ in $H_r^i$ since $H_r^i$ is $2$-connected. Then the trees
$T_1=xu_2\cup xu_4\cup xx_3$ and $T_2=u_4u_1\cup u_1u_2\cup P_1$ and
$T_3=u_4u_3\cup u_3u_2\cup P_2$ form three internally disjoint
$\{u_2,u_3,x_3\}$-trees, namely, $\overline{\kappa}_3(G)\geq 3$ (see
Figure 7 $(c)$).

\begin{figure}[h,t,b,p]
\begin{center}
\scalebox{0.8}[0.8]{\includegraphics{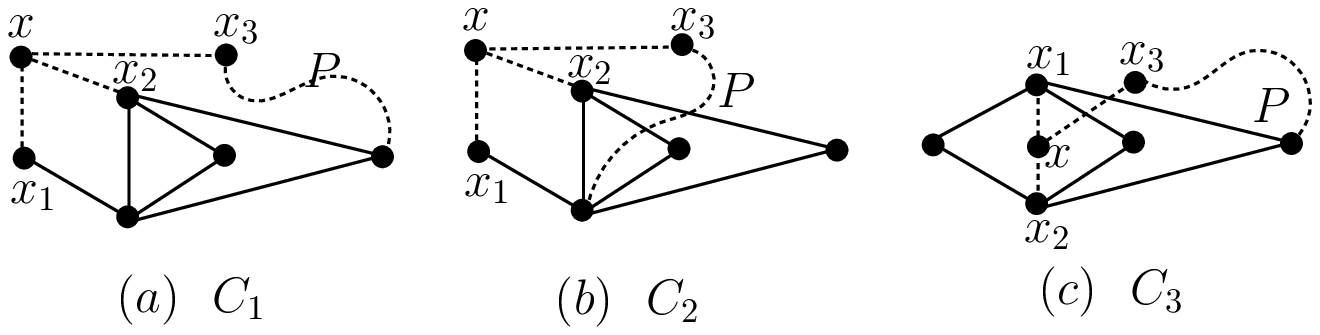}}\\
Figure 8. Graphs for Lemma \ref{lem9}.
\end{center}
\end{figure}

Now we consider $e\notin E(K_4)$. Thus $e\in E(H_r^i)(1\leq i\leq
7)$. We only consider $e\in E(H_r^1)$, and for $e\in E(H_r^i)(2\leq
i\leq 7)$ one can also check that $G\in \mathcal{G}_{n}^{*}$ or
$\overline{\kappa}_3(G)\geq 3$. Since $H_r^1=K_2\vee (r-2)K_1$, we
suppose that $e\in E(K_2\vee (r-2)K_1)(r\geq 3)$. For $r\geq 5$, $G$
must contain one of $\{C_1,C_2,C_3\}$ as its subgraph. One can check
that $\overline{\kappa}_3(G)\geq 3$ by Observation \ref{obs1} (see
Figure 8). For $r=4$, $G\in \mathcal{H}_8^{3}$ or $G\in
\mathcal{H}_8^{4}$ or $G\in \mathcal{H}_{11}^{3}$ or
$\overline{\kappa}_3(G)\geq 3$. For $r=3$, we can obtain $G\in
\mathcal{H}_7^{2}$ or $G\in \mathcal{H}_{10}^{2}$ or
$\overline{\kappa}_3(G)\geq 3$ by Lemma \ref{lem1} and Observation
\ref{obs2}.
\end{proof}

\begin{theorem}
Let $G$ be a connected graph of order $n$ such that
$\overline{\kappa}_3(G)\leq 2$. Then
$$
e(G)\leq \left\{
\begin{array}{cc}
2n-2&if~n=4,\\
2n-3&~~~~~~~if~n\geq 3, n\neq 4.
\end{array}
\right.
$$
with equality if and only if $G\in \mathcal{G}_n^*$.
\end{theorem}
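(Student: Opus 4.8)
The strategy is an induction on the order $n$, where the lemmas proved in the excerpt do essentially all of the work. For the small cases $n=3,4,5,6$ I would simply invoke the case analysis already carried out: for $n=3$ the bound $e(G)\le 3=2n-3$ with equality forcing $G=K_3$ is immediate; for $n=4$ the only graph with $\overline{\kappa}_3\le 2$ and maximum size is $K_4$, giving $e(G)\le 6=2n-2$; and for $n=5$ and $n=6$ Lemmas~\ref{lem3}–\ref{lem6} give both the bound $e(G)\le 2n-3$ (since any connected graph of order $5$ with $8$ edges, or of order $6$ with $10$ edges, has $\overline{\kappa}_3\ge 3$) and the exact list of extremal graphs, which by the definitions of $\mathcal{G}_5^*,\mathcal{G}_6^*$ are precisely the members of $\mathcal{G}_n^*$. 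So the real content is the inductive step for $n\ge 7$.

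For the inductive step, assume the statement holds for all connected graphs of order $n-1$ (with $n\ge 7$, so the $n-1\ge 6$ regime where the bound is uniformly $2(n-1)-3$), and let $G$ be a connected graph of order $n$ with $\overline{\kappa}_3(G)\le 2$. First I would establish that $\delta(G)\le 3$: if $\delta(G)\ge 4$, then a minimum-degree vertex together with the structure around it can be shown to force three internally disjoint Steiner trees (one checks that a vertex of degree $\ge 4$ whose neighbourhood is even mildly connected yields $\overline{\kappa}_3\ge 3$, in the spirit of the degree arguments in Lemmas~\ref{lem3} and~\ref{lem5}); more directly, one shows $\delta(G)\ge 4$ is incompatible with $e(G)$ being small enough. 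Then pick a vertex $x$ of minimum degree. If $d(x)\le 2$, set $G'=G-x$; then $G'$ is connected of order $n-1$, $\overline{\kappa}_3(G')\le\overline{\kappa}_3(G)\le 2$, so by induction $e(G')\le 2(n-1)-3$ and hence $e(G)=e(G')+d(x)\le 2(n-1)-3+2=2n-3$, with equality forcing $d(x)=2$ and $G'\in\mathcal{G}_{n-1}^*$; Lemma~\ref{lem7} then tells us that either $\overline{\kappa}_3(G)\ge 3$ (excluded) or $G\in\mathcal{G}_n^*$. If $d(x)=3$, the situation splits: if $x$ lies in a triangle, we may contract/delete within that triangle — more precisely, delete $x$ and add back an edge between two of its neighbours, giving $G'$ of order $n-1$ with $e(G')=e(G)-2$ and $\overline{\kappa}_3(G')\le 2$ (adding an edge cannot decrease $\overline{\kappa}_3$, so one must argue the other direction, i.e. that if $G'$ had $\overline{\kappa}_3\ge 3$ so would $G$), yielding $e(G)=e(G')+2\le 2(n-1)-3+2=2n-3$; and if equality holds, $G'\in\mathcal{G}_{n-1}^*$ and Lemma~\ref{lem9} finishes the classification. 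If $d(x)=3$ and $x$ is in no triangle, then $G'=G-x$ has $e(G')=e(G)-3$, so $e(G)=e(G')+3$; if $e(G')\le 2(n-1)-4$ we already win with room to spare, and if $e(G')=2(n-1)-3$ then $G'\in\mathcal{G}_{n-1}^*$ and Lemma~\ref{lem8} says $\overline{\kappa}_3(G)\ge 3$, a contradiction — so this sub-case cannot achieve equality and in fact forces $e(G)\le 2n-4$.

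The remaining point is the converse: every $G\in\mathcal{G}_n^*$ is connected of order $n$, has exactly $2n-3$ edges (respectively $2n-2$ for $K_4$), and satisfies $\overline{\kappa}_3(G)\le 2$. The edge count is a direct check from the definition of the classes $\mathcal{H}_n^i$ (the base graphs $H_r^i$ have $2r-3$ edges and each attaching operation adds one vertex and $\ldots$ here one verifies a $K_4$ attached at a degree-$2$ vertex adds $3$ vertices and $6$ edges, i.e.\ preserves the ``$2(\cdot)-3$'' relation after accounting, so the size is as claimed) together with the finitely many exceptional graphs $G_1,G_2,G_3,G_4$. The bound $\overline{\kappa}_3\le 2$ for these graphs follows from the counting argument in the proof of Lemma~\ref{lem4}: three internally disjoint Steiner trees for a $3$-set need at least $2+3+3=8$ edges in the sparsest case (at most one tree has only two edges), and in each member of $\mathcal{G}_n^*$ the cyclomatic/block structure prevents assembling three such edge-disjoint, internally-disjoint trees — concretely, every block of a graph in $\mathcal{G}_n^*$ is either a $K_4$ or a subgraph of some $H_r^i$, and one argues block-by-block that no $3$-set spreads over enough edge-disjoint connections.

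The main obstacle, and the place where care is genuinely needed, is the exhaustiveness of the reduction lemmas when $d(x)=3$ and, within those, confirming that the listed extremal graphs are closed under the inverse operations — i.e.\ that Lemmas~\ref{lem7}, \ref{lem8}, \ref{lem9} really do capture every way a graph of order $n-1$ in $\mathcal{G}_{n-1}^*$ can be extended by a degree-$2$ or degree-$3$ vertex (or by the edge-deletion-plus-vertex move) without creating a third Steiner tree. This is where the bulk of the casework lives, and it is essentially already discharged by the statements of those lemmas; assembling them into the induction is then routine, but one must be scrupulous that the degree of the removed vertex is genuinely $\le 3$ (handled by the minimum-degree argument) so that one of the three reduction lemmas always applies.
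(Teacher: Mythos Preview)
Your overall strategy---induction on $n$, casing on $\delta(G)$, and invoking Lemmas~\ref{lem7}--\ref{lem9}---matches the paper's, but your treatment of the $\delta(G)=3$ case is miscut and leaves a genuine gap.

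The split you need is not ``$x$ lies in a triangle'' versus ``$x$ lies in no triangle'', but rather ``$G[N_G(x)]$ has a missing edge'' versus ``$G[N_G(x)]\cong K_3$''. Lemma~\ref{lem9} applies precisely when some pair $x_i,x_j\in N_G(x)$ is \emph{non}-adjacent in $G$: you set $G''=G-x+x_ix_j$, argue $\overline{\kappa}_3(G'')\le 2$ (as you correctly note, by replacing $x_ix_j$ with the path $x_ixx_j$ in any putative third tree), and if $e(G'')=2(n-1)-3$ conclude $G\in\mathcal{G}_n^*$ via Lemma~\ref{lem9}. This already covers your ``no triangle'' sub-case, where incidentally your arithmetic slips: if $e(G-x)=2(n-1)-4$ then $e(G)=2n-3$ exactly, not $\le 2n-4$, so equality \emph{can} occur there and must be resolved through Lemma~\ref{lem9}, not dismissed.

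What your dichotomy misses entirely is the case $G[N_G(x)]\cong K_3$, i.e.\ $G[N_G[x]]\cong K_4$. Here there is no missing edge to add, so neither your ``add back an edge'' move nor Lemma~\ref{lem9} is available. The paper handles this separately: by Lemma~\ref{lem1}, no path in $G\setminus E(G[N_G[x]])$ can join two vertices of this $K_4$ (else $\overline{\kappa}_3(G)\ge 3$), so removing the six $K_4$-edges leaves $x$ isolated and three connected components $G_1,G_2,G_3$, one through each $x_i$. Applying the induction hypothesis to each $G_i$ then gives $e(G)=\sum_i e(G_i)+6<2n-3$ strictly, so this configuration never attains equality. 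Without this block-decomposition step the induction does not close.

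A minor point: your justification of $\delta(G)\le 3$ is vaguer than necessary. The paper gives a two-line edge count: if $\delta(G)\ge 4$ and $x$ has minimum degree, then $2e(G-x)=2e(G)-2\delta(G)\ge (n-2)\delta(G)\ge 4(n-2)$, contradicting the inductive bound $2e(G-x)\le 4(n-1)-6=4n-10$.
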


\begin{proof}
We apply induction on $n(n\geq 7)$. For $n=3,4$, it is easy to see
that $\mathcal {G}_{n}^{*}=\{K_n\}$. For $n=5$ or $n=6$, the
assertion holds by Lemmas \ref{lem4} and \ref{lem6}.

Suppose that the assertion holds for graphs of order less than
$n\geq 7$. Now we show that the assertion holds for $n\geq 7$. We
claim that $\delta(G)\leq 3$. Otherwise, $\delta(G)\geq 4$. Let $G'$
be the graph obtained from $G$ by deleting a vertex $x$ such that
$d(x)=\delta(G)$. Then, $2e(G')=2e(G)-2d(x)=2e(G)-2\delta(G)\geq
(n-2)\delta(G)\geq 4(n-2)$. But, by the induction hypothesis,
$2e(G')=2[2(n-1)-3]=4n-10$, a contradiction.

If $\delta(G)=1$, then we let $G'$ be the graph obtained from $G$ by
deleting a pendant vertex. Then by the induction hypothesis,
$e(G)=e(G')+1=2(n-1)-3+1=2n-4<2n-3$.

If $\delta(G)=2$, then we let $G'$ be the graph obtained from $G$ by
deleting a vertex of degree $2$. If $e(G')<2(n-1)-3$, then
$e(G)=e(G')+2<2(n-1)-3+2=2n-3$. If $e(G')=2(n-1)-3$, then
$e(G)=e(G')+2=2(n-1)-3+2=2n-3$. Since $G'\in \mathcal{G}_{n-1}^*$
and $\overline{\kappa}_3(G)\leq 2$, we can obtain $G\in
\mathcal{G}_{n}^*$ by Lemma \ref{lem7}.

Suppose that $\delta(G)=3$. Let $G'$ be the graph obtained from $G$
by deleting a vertex of degree $3$, say $x$. If $e(G')=2(n-1)-3$,
then $G'\in \mathcal{G}_{n-1}^*$. We can get a contradiction by
Lemma \ref{lem8}. If $e(G')<2(n-1)-3$, then $e(G)=e(G')+3\leq
2(n-1)-4+3=2n-3$.

Now we will show that $G\in \mathcal{G}_{n}^*$ for $e(G')=2(n-1)-4$.
Suppose $N_G(x)=\{x_1,x_2,x_3\}$. We have the following two cases to
consider.

\textbf{Case 1. } $G[N_G(x)]$ is not a triangle.

In this case, there exists an edge $x_ix_j\notin E(G)(1\leq i,j\leq
3)$. Let $G''=G'+x_ix_j$. Then we claim that
$\overline{\kappa}_3(G'')\leq 2$. In fact, suppose that
$\overline{\kappa}_3(G'')\geq 3$. Then there exists a $3$-subset
$S\subseteq V(G)$ such that $G''$ contains three internally disjoint
$S$-trees, denoted by $T_1,T_2,T_3$. If $x_ix_j\notin
\bigcup_{i=1}^3E(T_i)$, then $T_1,T_2,T_3$ are $3$ $S$-trees in $G$,
which contradicts $\overline{\kappa}_3(G)\leq 2$.

Assume that $x_ix_j$ belongs to some $S$-tree, without loss of
generality, say $x_ix_j\in E(T_1)$, then $T_1'=(T_1-x_ix_j)\cup
x_ix\cup xx_j$ is an $S$-tree in $G$. Thus, $T_1',T_2,T_3$ are three
internally disjoint $S$-trees in $G$, which implies that
$\overline{\kappa}_3(G)\geq 3$, a contradiction.

Since $e(G'')=e(G')+1=2(n-1)-3$ and $\overline{\kappa}_3(G)\leq 2$,
we have $G''\in \mathcal{G}_{n-1}^*$. Furthermore, $G\in
\mathcal{G}_{n}^*$ by Lemma \ref{lem9}.

\textbf{Case 2. } $G[N_G(x)]$ is a triangle.

Clearly, $G[N_G[x]]$ is a clique of order $4$, where
$N_G[x]=N_G(x)\cup \{x\}$. From Lemma \ref{lem1}, there is no path
connecting any two vertices of $G[N_G[x]]$. So, $G\setminus
E(G[N_G[x]])$ has three connected components except $x$. We denote
them by $G_1,G_2,G_3$ (note that $G_i\neq K_4(i=1,2,3)$). By the
induction hypothesis, $e(G)=\sum_{i=1}^3e(G_i)+6\leq
2\sum_{i=1}^3|G_i|-3=2(n-1)-3<2n-3$.

\end{proof}

\begin{corollary}
$$ f(n;\overline{\kappa}_3\leq 2)=\left\{
\begin{array}{cc}
2n-2&if~n=4,\\
2n-3&~~~~~~~if~n\geq 3, n\neq 4.
\end{array}
\right.
$$
\end{corollary}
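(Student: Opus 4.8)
The corollary is immediate from Theorem~1 (a disconnected graph with $\overline{\kappa}_3\le 2$ has fewer edges than the connected bound, since $2n_1-3+2n_2-3<2(n_1+n_2)-3$, and Theorem~1 gives the connected maximum and shows it is attained by $\mathcal{G}_n^*$), so the substance is the theorem, which I would prove by induction on $n$. The base cases $n=3,4$ are the trivialities $\mathcal{G}_3^*=\{K_3\}$, $\mathcal{G}_4^*=\{K_4\}$ (with $3=2\cdot3-3$ and $6=2\cdot4-2$ edges), and $n=5,6$ are Lemmas~\ref{lem3}--\ref{lem6}. It is convenient to record that every graph in $\mathcal{G}_n^*$ ($n\ge 5$) has exactly $2n-3$ edges, since $H_r^i$ has $2r-3$ edges and an attaching operation adds $3$ vertices and $6$ edges; this gives the ``if'' direction of the characterization, so the real work is the bound and the ``only if'' direction. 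Assume henceforth $n\ge 7$ and the statement for all smaller orders.

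First I would bound $\delta(G)$: if $\delta(G)\ge 4$, delete a minimum-degree vertex $x$, giving a connected $G'$ with $\overline{\kappa}_3(G')\le\overline{\kappa}_3(G)\le 2$, and then $2e(G')=2e(G)-2\delta(G)\ge(n-2)\delta(G)\ge 4(n-2)$ contradicts the inductive bound $2e(G')\le 4(n-1)-6$. If $\delta(G)=1$, deleting a pendant vertex gives $e(G)=e(G')+1\le 2n-4<2n-3$, so no such $G$ is extremal. If $\delta(G)=2$, delete a vertex of degree $2$ to get $G'$ with $\overline{\kappa}_3(G')\le 2$: then $e(G)=e(G')+2\le 2n-3$, with equality forcing $e(G')=2(n-1)-3$, so $G'\in\mathcal{G}_{n-1}^*$, and then Lemma~\ref{lem7} gives $G\in\mathcal{G}_n^*$ (the alternative $\overline{\kappa}_3(G)\ge 3$ being ruled out by hypothesis).

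The case $\delta(G)=3$ is the crux. Fix a vertex $x$ with $N_G(x)=\{x_1,x_2,x_3\}$ and put $G'=G-x$, so $\overline{\kappa}_3(G')\le 2$ and $e(G')\le 2(n-1)-3$. If $e(G')=2(n-1)-3$ then $G'\in\mathcal{G}_{n-1}^*$ and Lemma~\ref{lem8} forces $\overline{\kappa}_3(G)\ge 3$, a contradiction; hence $e(G')\le 2(n-1)-4$ and $e(G)\le 2n-3$. Now suppose $e(G)=2n-3$, i.e.\ $e(G')=2(n-1)-4$, and distinguish whether $G[N_G(x)]$ is a triangle. If it is, $G[N_G[x]]\cong K_4$, and Lemma~\ref{lem1} shows no path of $G\setminus E(G[N_G[x]])$ joins two of its vertices, so that graph consists of the isolated $x$ together with three components $G_1\ni x_1$, $G_2\ni x_2$, $G_3\ni x_3$; moreover no $G_i$ is a $K_4$, else $G$ contains two copies of $K_4$ sharing the single vertex $x_i$, and the $3$-set formed by $x_i$ plus one further vertex of each copy is joined by three internally disjoint Steiner trees, contradicting $\overline{\kappa}_3(G)\le 2$. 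The inductive hypothesis then gives $e(G)=\sum_{i=1}^3 e(G_i)+6\le 2(n-1)-9+6=2n-5<2n-3$, a contradiction. So $G[N_G(x)]$ is not a triangle: pick a non-edge $x_ix_j$ among the neighbours of $x$, set $G''=G'+x_ix_j$, and argue $\overline{\kappa}_3(G'')\le 2$ by rerouting --- at most one of any three internally disjoint $S$-trees of $G''$ can use $x_ix_j$ (they are edge-disjoint), and replacing that edge by the two-edge path $x_ix\cup xx_j$ yields three internally disjoint $S$-trees of $G$ (valid since $x\notin V(G'')$, hence $x\notin S$ and $x$ lies in neither of the other two trees). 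Then $e(G'')=2(n-1)-3$ forces $G''\in\mathcal{G}_{n-1}^*$, and Lemma~\ref{lem9} delivers $G\in\mathcal{G}_n^*$, completing the induction.

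The hard part is not this inductive skeleton but the structural Lemmas~\ref{lem7}, \ref{lem8}, \ref{lem9} underpinning it, whose proofs demand an exhaustive, case-by-case study of how a new degree-$2$ or degree-$3$ vertex (or the edge-subdivision move of Lemma~\ref{lem9}) can attach relative to each member of the seven families $\mathcal{H}_{n-1}^i$ constituting $\mathcal{G}_{n-1}^*$, invoking Observations~\ref{obs1}--\ref{obs3} and Lemma~\ref{lem1} repeatedly. Within the theorem itself the points I would watch most carefully are: confirming that in the $\delta(G)=3$ analysis the size of $G'$ drops by precisely the claimed amount and that the triangle / non-triangle dichotomy is genuinely exhaustive; and checking that the rerouted object $T_1'=(T_1-x_ix_j)\cup x_ix\cup xx_j$ is still a tree (the two pieces of $T_1-x_ix_j$ get reconnected through the fresh vertex $x$) and still internally disjoint from the other two, which is exactly where $x\notin V(G'')$ is used.
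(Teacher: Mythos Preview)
Your proposal is correct and follows essentially the same approach as the paper's proof of Theorem~1: the same induction on $n$, the same $\delta(G)\le 3$ reduction, and the identical treatment of the $\delta=1,2,3$ cases culminating in the triangle/non-triangle dichotomy handled via Lemmas~\ref{lem7}--\ref{lem9}. You in fact supply a few details the paper leaves implicit --- the disconnected case needed to pass from the theorem to the corollary, the verification that every graph in $\mathcal{G}_n^*$ has $2n-3$ edges, and a justification for the parenthetical ``$G_i\neq K_4$'' in Case~2 --- so your write-up is, if anything, slightly more complete than the original.
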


Since for $0\leq \ell\leq n-k+\lfloor k/2\rfloor-1$, we have that
$h(n;\overline{\kappa}_k\geq \ell+1)= f(n;\overline{\kappa}_k\leq
\ell)+1$, the following corollary is immediate.

\begin{corollary}
$$ h(n;\overline{\kappa}_3\geq 3)=\left\{
\begin{array}{cc}
2n-1&if~n=4,\\
2n-2&~~~~~~~if~n\geq 3, n\neq 4.
\end{array}
\right.
$$
\end{corollary}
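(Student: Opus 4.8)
The plan is to argue by induction on $n$, with Lemmas 1--9 doing the structural work. The base cases are cheap: for $n=3,4$ we have $\mathcal{G}_n^*=\{K_n\}$ and the bound is a one-line count, while for $n=5,6$ the whole statement is exactly Lemmas 4 and 6. So I would assume $n\ge 7$ and that the theorem holds in all smaller orders, observing once that deleting a vertex or an edge from $G$ cannot increase $\overline{\kappa}_3$ (internally disjoint $S$-trees of a subgraph are internally disjoint $S$-trees of $G$), so the induction hypothesis always applies to the smaller graphs we build; a disconnected remainder is treated componentwise, and components of order $\le 2$ or copies of $K_4$ only make the edge counts easier.

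The first step is to bound the minimum degree: $\delta(G)\le 3$. If $\delta(G)\ge 4$, deleting a minimum-degree vertex $x$ gives $2e(G-x)=2e(G)-2\delta(G)\ge(n-2)\delta(G)\ge 4(n-2)$, contradicting the inductive bound $2e(G-x)\le 4n-10$. The small-degree cases are then routine. For $\delta(G)=1$, deleting a pendant vertex gives $e(G)=e(G-x)+1\le 2n-4<2n-3$. For $\delta(G)=2$, delete a vertex $x$ of degree $2$: if $e(G-x)<2(n-1)-3$ then $e(G)\le e(G-x)+2<2n-3$, and if $e(G-x)=2(n-1)-3$ then $G-x\in\mathcal{G}_{n-1}^*$ by induction, and Lemma 7 (this is where $\overline{\kappa}_3(G)\le 2$ enters) upgrades this to $G\in\mathcal{G}_n^*$ with $e(G)=2n-3$.

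The substantive case is $\delta(G)=3$. Fix a vertex $x$ of degree $3$ with $N_G(x)=\{x_1,x_2,x_3\}$, and set $G'=G-x$, so $e(G)=e(G')+3$. If $e(G')=2(n-1)-3$ then $G'\in\mathcal{G}_{n-1}^*$, but Lemma 8 says no member of $\mathcal{G}_{n-1}^*$ can receive a degree-$3$ vertex while keeping $\overline{\kappa}_3\le 2$ --- contradiction. Hence $e(G')\le 2(n-1)-4$ and $e(G)\le 2n-3$, and it remains to show that $e(G')=2(n-1)-4$ forces $G\in\mathcal{G}_n^*$. I would split on whether $G[N_G(x)]$ is a triangle. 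If it is not, pick a non-edge $x_ix_j$ among the neighbours and set $G''=G'+x_ix_j$; a tree-swap shows $\overline{\kappa}_3(G'')\le 2$, since an $S$-tree of $G''$ that uses $x_ix_j$ can be rerouted through $x$ (replace $x_ix_j$ by $x_ix\cup xx_j$) to give an $S$-tree of $G$, so three internally disjoint $S$-trees in $G''$ would give three in $G$. Then $e(G'')=2(n-1)-3$ and the induction hypothesis give $G''\in\mathcal{G}_{n-1}^*$, and Lemma 9 delivers $G\in\mathcal{G}_n^*$. If $G[N_G(x)]$ is a triangle, then $G[N_G[x]]$ is a $K_4$; by Lemma 1 no path of $G$ joins two of its vertices outside it, so deleting the six edges of this $K_4$ breaks $G$ into the isolated vertex $x$ plus three connected pieces $G_1,G_2,G_3$ with $\overline{\kappa}_3\le 2$, none equal to $K_4$ (a $K_4$-piece would let one build three internally disjoint Steiner trees through $x$ and the central $K_4$). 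The induction bound $e(G_i)\le 2|G_i|-3$ then gives $e(G)=\sum_i e(G_i)+6\le 2(n-1)-3<2n-3$. Finally I would record the converse of the ``iff'': each $H_r^i$ has exactly $2r-3$ edges and $\overline{\kappa}_3=2$, and each attaching step adds one vertex and three edges without creating a $3$-set carrying three internally disjoint Steiner trees, so every $G\in\mathcal{G}_n^*$ attains the bound.

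The main obstacle is not the inductive skeleton but the structural lemmas feeding it --- Lemmas 7--9 and the base Lemmas 4 and 6. Proving these means enumerating, for each of the seven families $\mathcal{H}_n^i$, all ways of re-attaching a degree-$2$ or degree-$3$ vertex (or performing the edge-shift of Lemma 9) to a member of $\mathcal{G}_{n-1}^*$, and for every outcome either recognizing the result inside $\mathcal{G}_n^*$ or exhibiting a subdivision of a small graph already known (Observations 1--3, Lemmas 1--2, Figures 3--8) to force $\overline{\kappa}_3\ge 3$: a long but essentially mechanical check against Figures 1--8. A secondary nuisance is the bookkeeping of degenerate configurations (disconnected $G-x$, pieces of order $\le 2$, the $n=4$ exception) needed to make both the numerical bound and the equality characterization come out exactly right.
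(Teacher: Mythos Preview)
Your proposal is substantively correct, but it is a proof of the paper's main \emph{Theorem} (the bound $e(G)\le 2n-3$ with the equality characterization $G\in\mathcal{G}_n^*$), not of the stated Corollary about $h(n;\overline{\kappa}_3\ge 3)$. The induction on $n$, the $\delta(G)\le 3$ reduction, the treatment of $\delta=1,2,3$ via Lemmas~7--9, the tree-swap $G''=G'+x_ix_j$, and the $K_4$-decomposition in the triangle case all mirror the paper's proof of the Theorem essentially line for line.

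The paper, by contrast, proves this Corollary in one sentence: it has already established (Corollary~1, immediate from the Theorem) that $f(n;\overline{\kappa}_3\le 2)=2n-3$ for $n\ne 4$ and $2n-2$ for $n=4$, and then simply invokes the tautological relation $h(n;\overline{\kappa}_3\ge \ell+1)=f(n;\overline{\kappa}_3\le \ell)+1$. Your write-up never states this last step; you prove that the maximum edge count under $\overline{\kappa}_3\le 2$ is $2n-3$ and that it is attained, but you do not explicitly convert this into the claimed value of $h$. That conversion is trivial (one more edge than the extremal value forces $\overline{\kappa}_3\ge 3$, and the extremal graphs witness that one fewer does not), so there is no real gap---but you should say it, since $h$ is what the statement is about.
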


\noindent \textbf{Remark.} Let $n,\ell$ be odd, and $G'$ be a graph
obtained from a $(\ell-3)$-regular graph of order $n-2$ by adding a
maximum matching, and $G=G'\vee K_2$. Then $\delta(G)=\ell$,
$\overline{\kappa}_3(G)\leq \ell$ and
$e(G)=\frac{\ell+2}{2}(n-2)+\frac{1}{2}$.

Otherwise, let $G'$ be a $(\ell-2)$-regular graph of order $n-2$ and
$G=G'\vee K_2$. Then $\delta(G)=\ell$, $\overline{\kappa}_3(G)\leq
\ell$ and $e(G)=\frac{\ell+2}{2}(n-2)+1$.

Therefore,

$$
f(n;\overline{\kappa}_3\leq \ell)\geq \left\{
\begin{array}{cc}
\frac{\ell+2}{2}(n-2)+\frac{1}{2}&for~n,\ell~odd,\\
\frac{\ell+2}{2}(n-2)+1&~otherwise.
\end{array}
\right.
$$

One can see that for $\ell=2$ this bound is the best possible
($f(n;\overline{\kappa}_3\leq 2)=2n-3$). Actually, the graph
constructed for this bound is $K_2\vee (n-2)K_1$, which belongs to
$\mathcal{G}_{n}^*$.

\small


\begin{thebibliography}{11}

\bibitem{Bartfai} P. B\'{a}rtfai, \emph{Solution of a problem proposed by
P. Erd\"{o}s(in Hungarian)}, Mat. Lapok (1960), 175-140.

\bibitem{Bollobas1} B. Bollob\'{a}s, \emph{Extremal Graph Theory},
Acdemic Press, 1978.

\bibitem{Bollobas2} B. Bollob\'{a}s, \emph{On graphs with at most
three independent paths connecting any two vertices}, Studia Sci.
Math. Hungar 1(1966), 137-140.

\bibitem{Bollobas3} B. Bollob\'{a}s, \emph{Cycles and semi-topological configurations,
in: ``Theory and Applications of graphs''(Y. Alavi and D. R. Lick,
eds)} Lecture Notes in Maths 642, Springer 1978, 66-74.

\bibitem{Bondy} J.A. Bondy, U.S.R. Murty, \emph{Graph Theory},
GTM 244, Springer, 2008.

\bibitem{Chartrand} G. Chartrand, S. Kappor, L. Lesniak, D. Lick,
\emph{Generalized connectivity in graphs}, Bull. Bombay Math.
Colloq. 2(1984), 1-6.

\bibitem{Leonard1} J. Leonard, \emph{On a conjecture of Bollob\'{a}s and Edr\"{o}s},
Period. Math. Hungar. 3(1973), 281-284.

\bibitem{Leonard2} J. Leonard, \emph{On graphs with at most four
edge-disjoint paths connecting any two vertices}, J. Combin. Theory
Ser. B 13(1972), 242-250.

\bibitem{Leonard3} J. Leonard, \emph{Graphs with $6$-ways}, Canad. J. Math.
25(1973), 687-692.

\bibitem{LLSun} H. Li, X. Li, Y. Sun, \emph{The generalied $3$-connectivity
of Cartesian product graphs}, Discrete Math. Theor. Comput. Sci.
14(1)(2012), 43-54.

\bibitem{LL} S. Li, X. Li, \emph{ Note on the hardness of generalized
connectivity}, J. Combin. Optim. 24(2012), 389-396.

\bibitem{LLL} S. Li, W. Li, X. Li, \emph{The generalized connectivity of complete
equipartition $3$-partite graphs}, Bull. Malays. Math. Sci. Soc.,
accepted.

\bibitem{LLZ} S. Li, X. Li, W. Zhou, \emph{Sharp bounds for the
generalized connectivity $\kappa_3(G)$}, Discrete Math. 310(2010),
2147-2163.




\bibitem{Mader1} W. Mader, \emph{Ein Extremalproblem des Zusammenhangin endlichen Graphen},
Math. Z. 131(1973), 223-231.

\bibitem{Mader2} W. Mader, \emph{Grad und lokalerZusammenhangs von Graphen},
Math. Ann. 205(1973), 9-11.

\bibitem{Thomassen} B. S{\o}rensen, C. Thomassen , \emph{On
$k$-rails in graphs}, J. Combin. Theory 17(1974), 143-159.

\end{thebibliography}
\end{document}